\def\spine{1.1in}
\def\supp{\operatorname{supp}}
\def\theta{\vartheta}
\def\Tr{\operatorname{Tr}}
\numberwithin{equation}{section}
\newtheorem{theorem}{Theorem}[section]
\newtheorem{lemma}[theorem]{Lemma}
\newtheorem{proposition}[theorem]{Proposition}
\newtheorem{corollary}[theorem]{Corollary}
\theoremstyle{definition}
\newtheorem{remark}{Remark}
\title{Optimizers of three-point energies and nearly orthogonal sets}
\date{\today}
\author[D. Bilyk]{Dmitriy Bilyk}
\author[D. Ferizovi\'{c}]{Damir Ferizovi\'{c}}
\author[A. Glazyrin]{Alexey Glazyrin}
\author[R. Matzke]{Ryan W. Matzke}
\author[J. Park]{Josiah Park}
\author[O. Vlasiuk]{Oleksandr Vlasiuk}
\address{School of Mathematics, University of Minnesota, Minneapolis, MN 55455} 
\email{dbilyk@math.umn.edu}
\address{Department of Mathematics, Katholieke Universiteit Leuven, Leuven, Belgium} 
\email{damir.ferizovic@kuleuven.be}
\address{School of Mathematical \& Statistical Sciences, The University of Texas Rio Grande Valley, Brownsville, TX 78520}
\email{alexey.glazyrin@utrgv.edu}
\address{Department of Mathematics, Vanderbilt University, Nashville, TN 37240} 
\email{ryan.w.matzke@vanderbilt.edu}
\address{School of Mathematics, Georgia Insititute of Technology, Atlanta, GA 30332}
\email{j.park@gatech.edu}
\address{Department of Mathematics, Vanderbilt University, Nashville, TN 37240}
\email{oleksandr.vlasiuk@vanderbilt.edu }
\subjclass[2020]{Primary 52C17, 90C22; Secondary 52A40, 05D05}
\keywords{Potential energy minimization, optimal measures, positive definite kernels, spherical codes, tight frames, isotropic measures, nearly orthogonal sets}
\begin{document} 
\maketitle

\maketitle
\begin{abstract}
This paper is devoted to spherical measures and point configurations optimizing three-point energies. Our main goal is to extend the classic optimization problems based on pairs of distances between points to the context of three-point potentials. In particular,  we study three-point analogues of the sphere packing problem and the optimization problem for $p$-frame energies based on three points. It turns out  that both problems are inherently connected to the problem of nearly orthogonal sets by Erd\H{o}s. As the outcome, we provide a new solution of the Erd\H{o}s problem from the three-point packing perspective. We also show that the orthogonal basis uniquely minimizes the $p$-frame three-point energy when $0<p<1$ in all dimensions. The arguments make use of multivariate polynomials employed in semidefinite programming and based on the classical Gegenbauer polynomials. For $p=1$, we completely solve the analogous problem on the circle. As for higher dimensions, we show that the Hausdorff dimension of minimizers is not greater than $d-2$ for measures on $\mathbb{S}^{d-1}$. As the main ingredient of our proof, we show that the only isotropic measure without obtuse angles is the uniform distribution over an orthonormal basis.
\end{abstract}

\tableofcontents

\section{Introduction}

Various interesting point configurations in metric spaces are realized as optimizers of energies defined by two-point potentials. For an $N$-point configuration $\omega_N$ on the unit sphere, $\mathbb{S}^{d-1}$, the energy is defined as

\begin{equation}\label{e.2ener}
E_K (\omega_N) = \frac{1}{N^2} \sum_{x,y \in \omega_N}  K (x, y),
\end{equation}
where $K$ is the function defining a two-point potential. Typically, $K$ depends on the Euclidean distance between points $x$ and $y$.  A very interesting example, closely connected to unit norm tight frames and spherical designs, is  the $2$-frame potential $K(x,y)=|\langle x,y \rangle|^2$, or simply \textit{frame potential},  introduced by Benedetto and Fickus \cite{BF}, and later generalized, by Ehler and Okoudjou \cite{EO}, to the \textit{$p$-frame potential} $|\langle x,y \rangle|^p$, for $p \in (0, \infty)$, which has been studied further in, e.g., \cite{BE, BCGKO, BGMPV1, BGMPV2, CGGKO, Gl, GP, H1, H2, WO, XX}. See \cite{BHS} for an extensive introduction on energy optimization for two-point potentials.

In this paper we study three-point potentials, i.e. analogous   energies depending on interactions of triples of points rather than  pairwise interactions. That is, we shall be concerned with the minimization of three-point discrete energies and energy integrals:
$$ E_K (\omega_N) = \frac{1}{N^3} \sum_{x,y,z \in \omega_N}  K (x, y,z) \,\,\, \textup{ and } \,\,\, I_K(\mu) = \int_{\mathbb{S}^{d-1}} \int_{\mathbb{S}^{d-1}} \int_{\mathbb{S}^{d-1}} K(x,y,z) d\mu(x) d\mu(y) d\mu(z) ,$$ 
see Section \ref{sec:not} for precise definitions.  Such energies arise  in various applications and have been  previously studied by the authors \cite{BFGMPV,BFGMPV1}.

The notion of the $p$-frame potential is naturally extended  to the three-point case by defining  the three-point $p$-frame potential as $K(x,y,z) = |uvt|^p = |\langle y,z \rangle \langle x,z \rangle \langle x,y \rangle|^p$.  In this paper, we are mostly interested in the case $p=1$, as well as $0<p<1$. As we shall see, even in this case the problem of describing minimizers appears to be highly nontrivial and inherently connected to the problem of Erd\H{o}s about nearly orthogonal sets and  packing problems which we describe below. 

\subsection{Nearly orthogonal sets} A (multi-)set of nonzero vectors in $\mathbb{R}^d$ is called \textit{nearly orthogonal} if, for any three distinct vectors from the set, at least two of them are orthogonal. Erd\H{o}s asked about the maximum size of nearly orthogonal sets. In 1991, Rosenfeld showed that the maximum size of such a set is $2d$ \cite{Ros}. Other proofs of this result are given in \cite{Pud} and \cite{Dea}, and we provide an alternative proof in Theorem \ref{thm:rosen}. Nearly orthogonal sets of size $2d$ in $\mathbb{S}^{d-1}$ are called \textit{Rosenfeld sets}. One might be tempted to believe that Rosenfeld sets  in $\mathbb{R}^d$ are necessarily unions of two orthogonal bases but, in fact, for $d\geq 5$ this is not true (see \cite{Dea} for examples) and the full characterization of Rosenfeld sets is an open problem.

We would also like to remark that Rosenfeld \cite{Ros} used the term ``almost orthogonal sets'', but ``nearly orthogonal'' seems to be more common in this context in later literature, although both are still used (see, e.g., \cite{AlSz,Pol}).

\subsection{Packing problems on the sphere} The classic packing problem on the sphere, or the problem of determining an optimal spherical code, consists in finding the maximal number of points in the configuration $\omega_N\in\mathbb{S}^{d-1}$ such that for any distinct points $x,y\in\omega_N$, $\langle x,y \rangle \leq \alpha$. This condition ensures that distinct points of $\omega_N$ are separated by a spherical distance of at least $\arccos \alpha$. In the case $\alpha=1/2$, the problem is known as a \textit{kissing number} problem that, despite its rich history, is solved only in a handful of small dimensions \cite{SvdW, Lev, OS, Mus}. 

We study the three-point analogue of the packing problem. Particularly, we are interested in finding the maximal number of points in the configuration $\omega_N\in\mathbb{S}^{d-1}$ such that for any distinct points $x,y,z\in\omega_N$, $\langle x,y \rangle \langle x,z \rangle \langle y,z \rangle \leq \alpha$. In Section \ref{sec:erdos}, we will show that the three-point packing problem for $\alpha=0$ is surprisingly similar to the classic two-point packing problem for $\alpha=0$ and the optimal configurations are precisely Rosenfeld sets.

\subsection{Outline of the paper} In Section \ref{sec:not} we present some relevant background information, in particular, about isotropic measures, frames, frame energy, and semidefinite programming bounds.  In Section \ref{sec:erdos} we explore the connections between nearly orthogonal sets and three-point packing bounds, giving an alternative proof and generalizing Rosenfeld's result (Theorem \ref{thm:rosen}). Section \ref{sec:pframe} is devoted to the three-point $p$-frame energy: 
in particular, we show that in all dimensions for $0<p<1$ any minimizer of the $p$-frame energy  is a uniform distribution over an orthonormal basis, up to symmetries (Theorem \ref{thm:p<1}) and on $\mathbb S^1$ minimizers of the $1$-frame energy are convex combinations of uniform distributions over two orthonormal bases (Theorem \ref{thm:2-dim}). In Section \ref{sec:non-obtuse} we use a spherical version of Jung's inequality \cite{D} to show that any isotropic measure without obtuse angles in its support  is a uniform distribution over an orthonormal basis. Finally, in Section \ref{sec:supp} we show that the support of a measure minimizing the three-point $1$-frame energy must have codimension at least one (Theorem \ref{thm:codim1}).

\section{Notation and preliminaries}\label{sec:not}

The notation in the paper follows \cite{BFGMPV, BFGMPV1}. All potentials are defined for triples of points $(x,y,z)$ 
on the unit sphere $\mathbb{S}^{d-1}$. For brevity, throughout the paper we use the notation $u=\langle y,z \rangle$, $v=\langle x,z \rangle$, $t=\langle x,y \rangle$, where $\langle \cdot , \cdot \rangle$ denotes the standard Euclidean inner product. We denote by $\mathcal{P}(\mathbb{S}^{d-1})$ the set of Borel probability measures on $\mathbb{S}^{d-1}$. The normalized uniform distribution over $\mathbb{S}^{d-1}$ is denoted by $\sigma$. A measure $\mu$ is called \textit{balanced} if its center of mass is at the origin, that is, $\int_{\mathbb{S}^{d-1}} x d\mu(x) =0$.

\subsection{Three-point energies} Let $\omega_N = \{ z_1, z_2, ..., z_N\}$ be an $N$-point configuration (multiset) in $\mathbb{S}^{d-1}$, for $N \geq 3$. Given a continuous three-point kernel $K:\Big(\mathbb{S}^{d-1}\Big)^3 \rightarrow \mathbb{R}$, the discrete $K$-energy of $\omega_N$ is defined to be 
\begin{equation}\label{eq:DiscreteEnergyDef}
E_K(\omega_N) :=  \frac{1}{N^3} \sum_{i=1}^{N} \sum_{j=1}^{N} \sum_{k = 1}^{N} K(z_i, z_j, z_k).
\end{equation}
Similarly, we define the energy for a measure $\mu\in\mathcal{P}(\mathbb{S}^{d-1})$:
\begin{equation}\label{eq:ContEnergyDef}
I_K(\mu) = \int_{\mathbb{S}^{d-1}} \int_{\mathbb{S}^{d-1}} \int_{\mathbb{S}^{d-1}} K(x,y,z) d\mu(x) d\mu(y) d\mu(z).
\end{equation}
Under the normalization in \eqref{eq:DiscreteEnergyDef}, $ E_K (\omega_N) = I_K (\mu)$ for the discrete measure $\mu = \frac{1}{N}\sum_{x\in \omega_N} \delta_x$. 
Generally, we are interested in finding point configurations and measures optimizing the energy for a given potential $K$.

\subsection{Isotropic energies and frame potentials} A measure $\mu\in\mathcal{P}(\mathbb{S}^{d-1})$ is called \textit{isotropic} if
\begin{equation}\label{eqn:isotropic_init}
\int_{ \mathbb{S}^{d-1}} x x^T \,  d\mu(x)=\frac 1 d I_d,
\end{equation}
where $I_d$ is the unit $d\times d$ matrix.
This is equivalent to the statement that for any $y\in\mathbb{S}^{d-1}$,
\begin{equation}\label{eqn:isotropic}
\int_{ \mathbb{S}^{d-1}} \langle x, y\rangle^2\,  d\mu(x)=\frac 1 d.
\end{equation}

\noindent One can easily show that isotropic measures are precisely the minimizers of the 2-frame energy, see e.g. \cite{BGMPV1}. 
\begin{lemma}\label{lem:frame}
For any $\mu\in \mathcal{P}(\mathbb{S}^{d-1})$,
\begin{equation}\label{eq:frame}
\int_{ \mathbb{S}^{d-1}} \int_{\mathbb{S}^{d-1}} \langle x,y \rangle^2 d\mu(x) d\mu(y) \geq \frac 1 d.
\end{equation}
The equality holds if and only if $\mu$ is isotropic.
\end{lemma}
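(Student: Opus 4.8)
The plan is to reduce the double integral to a spectral quantity of the second-moment matrix of $\mu$ and then carry out a simple eigenvalue optimization. First I would introduce the symmetric matrix
$$M := \int_{\mathbb{S}^{d-1}} x x^T \, d\mu(x),$$
defined entrywise, which exists because $\mu$ is a probability measure and the integrand is continuous and bounded on the compact sphere. The matrix $M$ is positive semidefinite, since $y^T M y = \int \langle x, y\rangle^2\, d\mu(x) \geq 0$ for every $y$. Moreover, because every $x \in \mathbb{S}^{d-1}$ satisfies $\Tr(x x^T) = \|x\|^2 = 1$, linearity of the integral yields the normalization $\Tr(M) = 1$.

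The key algebraic observation is the identity $\langle x, y \rangle^2 = x^T (y y^T) x = \Tr(x x^T y y^T)$, the last equality by cyclicity of the trace. Integrating this against $d\mu(x)\, d\mu(y)$ and interchanging the two integrals with the (entrywise) trace — justified by linearity together with Fubini's theorem on the compact product sphere — gives
$$\int_{\mathbb{S}^{d-1}}\int_{\mathbb{S}^{d-1}} \langle x,y\rangle^2 \, d\mu(x)\, d\mu(y) = \Tr\!\left( M^2 \right) = \sum_{i=1}^d \lambda_i^2,$$
where $\lambda_1, \dots, \lambda_d \geq 0$ are the eigenvalues of $M$.

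It then remains to minimize $\sum_{i=1}^d \lambda_i^2$ subject to $\lambda_i \geq 0$ and $\sum_{i=1}^d \lambda_i = \Tr(M) = 1$. By the Cauchy--Schwarz inequality (equivalently, the quadratic--arithmetic mean inequality applied to $\lambda_1, \dots, \lambda_d$),
$$\sum_{i=1}^d \lambda_i^2 \geq \frac{1}{d}\left( \sum_{i=1}^d \lambda_i \right)^2 = \frac{1}{d},$$
which is precisely \eqref{eq:frame}. Equality in Cauchy--Schwarz forces $\lambda_1 = \dots = \lambda_d = 1/d$, i.e. $M = \frac{1}{d} I_d$, which by \eqref{eqn:isotropic_init} is exactly the statement that $\mu$ is isotropic; conversely, isotropy gives $\Tr(M^2) = d \cdot (1/d)^2 = 1/d$, so the bound is attained precisely for isotropic measures.

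I expect this argument to be essentially routine, as the inequality is a one-line consequence of convexity once the problem has been linearized. The only points deserving a small amount of care are the justification for passing the trace through the matrix-valued integral (pure linearity, since each entry of the integrand is continuous and bounded on the compact sphere) and the handling of the equality case; the latter is the part I would write out most carefully, since the ``if and only if'' is what makes the lemma useful for identifying isotropic measures as the exact minimizers.
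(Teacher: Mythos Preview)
Your argument is correct. The paper does not actually prove this lemma in-text; it merely states it and refers the reader to \cite{BGMPV1}. Your second-moment--matrix approach is the standard one and, in fact, is exactly the two-input analogue of the linear-algebraic proof the paper \emph{does} spell out for Lemma~\ref{lem:uvt}: there the authors write $I_{uvt}(\mu) = \Tr(A^3)$ for the same matrix $A = \int xx^T\,d\mu$ and apply H\"older's inequality to get $\Tr(A^3)\ge (\Tr A)^3/d^2$, with equality iff $A = \tfrac{1}{d}I_d$. Your proof replaces the cube by a square and H\"older by Cauchy--Schwarz, giving $\Tr(A^2)\ge (\Tr A)^2/d$ with the identical equality characterization. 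So your write-up would slot in seamlessly and is fully consistent with the paper's own methodology.
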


 Applying this lemma to the uniform distribution over a discrete configuration $\omega_N=\{x_1,\ldots,x_N\}$, $N \geq d$, we get the result of Benedetto and Fickus in the discrete setting \cite[Theorem 7.1]{BF}:
\begin{equation}\label{eq:framediscrete}
\sum_{i=1}^{N}\sum_{j = 1}^{N} \langle x_i, x_j \rangle^2\geq \frac {N^2} d,
\end{equation}
with equality if and only if for each $y\in \mathbb R^d$ $$ \sum_{i=1}^N   \langle x_i, y  \rangle^2 = \frac{N}{d}  \|y \|^2 ,$$
i.e.  $\omega_N$ is  a \textit{unit norm tight frame}. 
  The bound \eqref{eq:framediscrete} is essentially a special case of the results of Welch \cite{W} and Sidelnikov \cite[Corollary 1]{Sid}.

\subsection{Semidefinite programming and three-point bounds}\label{sec:SemiDef Subsec}

For a variety of optimization problems for two-point energies, the linear programming method serves as the main machinery (see, e.g.,  \cite{Del, KL, Y, CK, BGMPV1}). 
Naturally, optimization problems for three-point energies should employ the three-point generalization of this method. This generalization was developed for the spherical case by Bachoc and Vallentin \cite{BV} who used it to obtain new bounds for the kissing number problem. They produced a class of infinite matrices and associated polynomials of the form
\begin{equation}\label{eq:SemDefProgYs}
(Y_{m}^d)_{i+1,j+1} (x,y,z) := Y_{m,i,j}^d(x,y,z) := P_i^{d + 2m}(u) P_{j}^{d +  2m}(v) Q_m^d(u,v,t),
\end{equation}
where $m,i,j \in \mathbb{N}_0$, $P_m^h$ is the normalized Gegenbauer polynomial of degree $m$ on $\mathbb{S}^{h-1}$ and
\begin{equation}\label{eq:BachocValQKernel}
Q_m^d(u,v,t) = ((1-u^2)(1-v^2))^{\frac{m}{2}}P_{m}^{d-1}\left(\frac{t-uv}{\sqrt{(1-u^2)(1-v^2)}}\right).
\end{equation}

For convenience, we include here the upper left $3\times 3$, $2 \times 2$, and $1 \times 1$ submatrices of infinite matrices $Y_0^d$, $Y_1^d$, and $Y_2^d$:
$$\begin{pmatrix} 
1 & v & \frac {dv^2-1} {d-1}\\
u & uv & u \frac {dv^2-1} {d-1}\\ 
\frac {dv^2-1} {d-1} & \frac {du^2-1} {d-1} v & \frac {du^2-1} {d-1} \frac {dv^2-1} {d-1}\end{pmatrix}$$
$$ \begin{pmatrix} t-uv & u (t-uv)\\ 
v(t-uv) & uv(t-uv)\end{pmatrix}, \begin{pmatrix} \frac {(d-1)(t-uv)^2-(1-u^2)(1-v^2)} {d-2}\end{pmatrix}.$$

Symmetrizing by all permutations $\pi$ over the variables $x$, $y$, and $z$, Bachoc and Vallentin defined the following symmetric matrices and associated polynomials
\begin{equation*}\label{eq:Symmetric Semi Def Kernel}
(S_{m}^d)_{i+1,j+1} (x,y,z) := S_{m,i,j}^d (x,y,z) := \frac{1}{6} \sum_{\pi} Y_{m,i,j}^d (\pi(x), \pi(y), \pi(z)).
\end{equation*}

The following property of these matrices allows one to use them for optimization purposes. For any $\mu \in \mathcal{P}(\mathbb{S}^{d-1})$ and $e \in \mathbb{S}^{d-1}$, the infinite matrices 
$$ \int_{\mathbb{S}^{d-1}} \int_{\mathbb{S}^{d-1}} Y_{m}^d(x,y,e) d\mu(x) d \mu(y)$$
and
$$ S_m^d(\mu) := \int_{\mathbb{S}^{d-1}} \int_{\mathbb{S}^{d-1}} \int_{\mathbb{S}^{d-1}} S_{m}^d(x,y,z) d\mu(x) d \mu(y) d \mu(z)$$
are positive semidefinite, that is, all principal minors (formed by finite submatrices) are non-negative. 
This property leads to the following energy minimization  theorem {(more details and justification can be found in Sections 3 and 4 of \cite{BFGMPV1}).}

\begin{theorem}\label{thm:SemiDefMin}
Let $n \in \mathbb{N}_0$. For each $m \leq n$, let $A_m$ be an infinite, symmetric, positive semidefinite matrix with finitely many nonzero entries, with the additional requirement that $A_0$ has only zeros in its first row and first column. Let
$$K(x,y,z) = \sum_{m=0}^{n} \Tr( S_{m}^d(x,y,z)\, A_m).$$
Then $\sigma$ is a minimizer of $I_K$ over probability measures on the sphere $\mathbb{S}^{d-1}$ and $I_K(\sigma)=0$.
\end{theorem}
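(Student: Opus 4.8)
The plan is to reduce the statement to the elementary fact that the trace of a product of two positive semidefinite matrices is nonnegative, and then to compute the energy of $\sigma$ by hand. First I would interchange the (finite) sum, the trace, and the integral. Since each $A_m$ has only finitely many nonzero entries, the integrand $\Tr(S_m^d(x,y,z)\,A_m)=\sum_{i,j}(S_m^d(x,y,z))_{ij}(A_m)_{ji}$ is a finite sum of bounded continuous functions on the compact space $(\mathbb{S}^{d-1})^3$, so by linearity
$$I_K(\mu) = \sum_{m=0}^{n} \int_{(\mathbb{S}^{d-1})^3} \Tr\big(S_m^d(x,y,z)\,A_m\big)\, d\mu(x)\,d\mu(y)\,d\mu(z) = \sum_{m=0}^{n} \Tr\big(S_m^d(\mu)\,A_m\big),$$
where $S_m^d(\mu)$ is the matrix defined just before the theorem.

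For the lower bound I would use that $S_m^d(\mu)$ is positive semidefinite for every $\mu$ (the property stated above) and that each $A_m$ is positive semidefinite by hypothesis. For real symmetric positive semidefinite matrices $A,B$ one has $\Tr(AB)=\Tr(A^{1/2}BA^{1/2})\ge 0$, since $A^{1/2}BA^{1/2}$ is again positive semidefinite; as only finitely many entries are involved, this applies to each summand. Hence $I_K(\mu)=\sum_{m}\Tr(S_m^d(\mu)\,A_m)\ge 0$ for every $\mu\in\mathcal{P}(\mathbb{S}^{d-1})$, so $0$ is a lower bound for $I_K$.

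It remains to show $I_K(\sigma)=0$, for which the crux is the explicit computation of $S_m^d(\sigma)$. Because $\sigma^{\otimes 3}$ is invariant under permutations of the three arguments, symmetrization is harmless and $S_m^d(\sigma)=\int Y_m^d(x,y,z)\,d\sigma(x)\,d\sigma(y)\,d\sigma(z)$ entrywise. Fixing $z=e$ and writing $x=v\,e+\sqrt{1-v^2}\,\bar x$, $y=u\,e+\sqrt{1-u^2}\,\bar y$ with $\bar x,\bar y$ on the equatorial sphere $\mathbb{S}^{d-2}$ orthogonal to $e$, the factor $Q_m^d$ becomes $((1-u^2)(1-v^2))^{m/2}P_m^{d-1}(\langle\bar x,\bar y\rangle)$, while $d\sigma$ factors into a radial part in the height times the uniform measure on $\mathbb{S}^{d-2}$. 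For $m\ge 1$, integrating $P_m^{d-1}(\langle\bar x,\bar y\rangle)$ over $\bar y\in\mathbb{S}^{d-2}$ yields $0$ by orthogonality of the degree-$m$ Gegenbauer polynomial to the constant, so $S_m^d(\sigma)=0$ regardless of $i,j$. For $m=0$ one has $Q_0^d\equiv 1$, and the entries factor as $\big(\int P_i^{d}(\langle y,e\rangle)\,d\sigma(y)\big)\big(\int P_j^{d}(\langle x,e\rangle)\,d\sigma(x)\big)=\delta_{i,0}\,\delta_{j,0}$; thus $S_0^d(\sigma)$ is the matrix whose only nonzero entry is a $1$ in position $(1,1)$. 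Combining these, $I_K(\sigma)=\Tr(S_0^d(\sigma)\,A_0)=(A_0)_{1,1}$, which vanishes precisely because of the hypothesis that $A_0$ has zero first row and column. Therefore $I_K(\sigma)=0=\min_{\mu}I_K(\mu)$ and $\sigma$ is a minimizer.

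The only genuinely nontrivial input is the positive semidefiniteness of $S_m^d(\mu)$, which is quoted from Bachoc--Vallentin and justified in the cited earlier work; granting it, the argument is a short chain of reductions. The step most prone to error is the computation of $S_m^d(\sigma)$: one must set up the equatorial fibration correctly and recognize that the only surviving mode is $m=0$, $(i,j)=(0,0)$ — which is exactly why the constraint that $A_0$ have vanishing first row and column is imposed.
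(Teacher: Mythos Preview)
Your argument is correct. The paper itself does not include a proof of this theorem; it states the result and defers the justification to Sections~3 and~4 of \cite{BFGMPV1}. What you have written is precisely the standard argument one would expect to find there: reduce $I_K(\mu)$ to $\sum_m \Tr(S_m^d(\mu)A_m)$, use the quoted positive semidefiniteness of $S_m^d(\mu)$ together with the hypothesis on $A_m$ to conclude nonnegativity, and then verify $I_K(\sigma)=0$ by computing $S_m^d(\sigma)$ explicitly via the equatorial fibration and Gegenbauer orthogonality. Your identification of the sole surviving entry $(S_0^d(\sigma))_{1,1}=1$ correctly explains the otherwise mysterious hypothesis that $A_0$ have zero first row and column. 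One small remark on rigor: when you invoke $\Tr(AB)\ge 0$ for positive semidefinite $A,B$, you are implicitly restricting to the finite principal submatrix of $S_m^d(\mu)$ indexed by the support of $A_m$; this is fine because principal submatrices of positive semidefinite matrices remain positive semidefinite, but it is worth saying explicitly.
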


Observe that when the matrices $A_m$ are diagonal, the kernel $K$ is simply a non-negative linear combination of the diagonal entries of the matrices $S_m^d$. 
Due to the nature of the bounds from Theorem \ref{thm:SemiDefMin} and their most common use, we will refer to them as semidefinite programming bounds throughout the paper.

\section{Three-point packing problem and nearly orthogonal sets}\label{sec:erdos}

We use semidefinite programming bounds to prove a three-point packing bound generalizing the aforementioned result of Rosenfeld \cite{Ros}. Observe that this also gives an alternative proof of Rosenfeld's result. 

\begin{theorem}\label{thm:rosen}
Assume for any three distinct points $x, y, z$ of the set $\omega_N \subset\mathbb{S}^{d-1}$,
$$\langle x,y \rangle \langle x,z \rangle \langle y,z \rangle \leq 0.$$
Then $N \leq 2d$. The equality $N= 2d$ is achieved  only if $\omega_N$ is nearly orthogonal, i.e. a Rosenfeld set. 
\end{theorem}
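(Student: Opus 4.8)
The plan is to encode the configuration in its second-moment matrix $M = \frac1N\sum_{i=1}^N x_i x_i^{T}$ and to read off $\Tr(M^2)$ and $\Tr(M^3)$ as normalized sums of the two- and three-point products over $\omega_N$. The hypothesis is exactly a sign condition on the genuinely three-point contributions, so the idea is to isolate those, discard them, and let two positive-definiteness facts finish the job. Expanding the cube gives the identity
\[ N^3\,\Tr(M^3) = \sum_{i,j,k}\langle x_i,x_j\rangle\langle x_j,x_k\rangle\langle x_k,x_i\rangle. \]
First I would split this triple sum according to coincidences among $i,j,k$: a term equal to $1$ for each of the $N$ fully-diagonal triples, a term $\langle x_i,x_k\rangle^2$ for each of the three two-coincidence patterns, and the product $uvt$ for each genuinely distinct ordered triple. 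Using $\sum_{i\ne k}\langle x_i,x_k\rangle^2 = N^2\Tr(M^2)-N$, this becomes
\[ N^3\,\Tr(M^3) = 3N^2\,\Tr(M^2) - 2N + \Sigma, \qquad \Sigma := \sum_{\text{distinct}} uvt. \]
By hypothesis every distinct triple has $uvt\le 0$, so $\Sigma\le 0$, and dividing by $N$ yields $N^2\Tr(M^3)\le 3N\Tr(M^2)-2$.

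Next I would bring in two lower bounds. Writing $\tau=\Tr(M^2)$, the frame bound (Lemma \ref{lem:frame} for the uniform discrete measure) gives $\tau\ge 1/d$, while Cauchy--Schwarz applied to the eigenvalues of $M$, which are nonnegative and sum to $\Tr(M)=1$, gives $\Tr(M^3)\ge\tau^2$. Substituting the latter produces $N^2\tau^2\le 3N\tau-2$, that is $(N\tau-1)(N\tau-2)\le 0$, whence $N\tau\le 2$ and therefore $N\le 2/\tau\le 2d$.

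For the equality case I would run the chain backwards. If $N=2d$, then $2d\le 2/\tau$ forces $\tau\le 1/d$, hence $\tau=1/d$ by the frame bound, so $M=\frac1d I_d$ (the configuration is isotropic, a unit-norm tight frame) and $N\tau=2$. Sitting at the boundary $(N\tau-1)(N\tau-2)=0$ forces equality in the step $\Sigma\le 0$, i.e. $\Sigma=0$; since each summand is $\le 0$, every distinct triple must have $uvt=0$, meaning at least one of the three pairwise inner products vanishes. This is exactly the nearly orthogonal (Rosenfeld) condition, giving the ``only if'' conclusion.

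The main obstacle is locating the right quadratic combination. Using $\Sigma\le 0$ together with either bound in isolation only yields $N\lesssim 3d$ (for instance, dropping the linear term $-2N$ gives $N\tau<3$); the decisive gain comes from retaining $-2N$ in the identity and pairing it against the Cauchy--Schwarz inequality, so that the resulting quadratic in $N\tau$ has roots exactly $1$ and $2$. I would also remark that this argument is the elementary shadow of a semidefinite certificate in the sense of Theorem \ref{thm:SemiDefMin}: the inequalities $\Tr(M^3)\ge(\Tr(M^2))^2$ and $\Tr(M^2)\ge 1/d$ are positive-definiteness statements for diagonal entries of the matrices $S_m^d$, so the whole proof can be recast in that language if uniformity with the rest of the paper is desired.
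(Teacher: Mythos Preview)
Your proof is correct and takes a genuinely different, more elementary route than the paper's. The paper works in the semidefinite-programming framework of Section~\ref{sec:SemiDef Subsec}: it sums the specific kernels $S_{0,2,2}^d$ and $S_{1,1,1}^d$ over all triples, invokes Theorem~\ref{thm:SemiDefMin} for non-negativity, drops the off-diagonal triple products, applies the frame bound, and (after a case split on the sign of $18-12N/d$) factors a quadratic in $N/d$. You instead recognize the full triple sum as $N^3\Tr(M^3)$ for the second-moment matrix $M$, and replace the SDP certificate by two one-line spectral facts: $\Tr(M^3)\ge(\Tr(M^2))^2$ (Cauchy--Schwarz on the eigenvalues, using $\Tr M=1$) and $\Tr(M^2)\ge 1/d$. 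This yields the cleaner quadratic $(N\tau-1)(N\tau-2)\le 0$ in $N\tau$, with no case analysis, and the equality discussion falls out immediately. Your argument is essentially the linear-algebra ``Proof~1'' of Lemma~\ref{lem:uvt} transplanted directly to the packing problem; what the paper's approach buys is uniformity with the SDP machinery used elsewhere, while yours is self-contained and exposes exactly which two positivity facts are doing the work. Your closing remark that these inequalities are ``diagonal entries of the matrices $S_m^d$'' is morally right but a bit loose---it would be more accurate to say they correspond to particular positive combinations of those entries.
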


\begin{proof}
Following the notation from Section \ref{sec:SemiDef Subsec}, we use 
$$6 \frac{(d-1)^2}{d^2} S_{0,2,2}^d =2(u^2v^2+u^2t^2+v^2t^2)-\frac 4 d(u^2+v^2+t^2)+\frac 6 {d^2}$$
and
$$6 \: S_{1,1,1}^d =6uvt-2(u^2v^2+u^2t^2+v^2t^2).$$

Summing up the values of the kernels above over all points of $\omega_N = \{ x_1,\dots,x_N\}$, we must get a non-negative number due to Theorem \ref{thm:SemiDefMin}, so
\begin{align*}
0 &\leq \sum\limits_{i,j,k=1}^N 6 \left(\frac{(d-1)^2}{d^2} S_{0,2,2}^d + S_{1,1,1}^d\right) (x_i, x_j, x_k)\\
 &= \sum\limits_{i,j,k=1}^N \left(6\langle x_i,x_j \rangle \langle x_i,x_k \rangle \langle x_j,x_k\rangle - \frac 4 d (\langle x_i,x_j \rangle^2 + \langle x_i,x_k \rangle^2 + \langle x_j,x_k \rangle^2) + \frac 6 {d^2}\right).
\end{align*}

All triple product terms for distinct $i,j,k$ are nonpositive so we can eliminate them without changing the validity of the inequality (equality in this step is achieved if and only if $\omega_N$ is nearly orthogonal). The remaining terms are split into four cases: $i=j=k$, $i=j\neq k$, $i=k\neq j$, and $j=k\neq i$. The sum of triple product terms is $N$ for the first group and $\sum\limits_{i,j=1}^N \langle x_i,x_j \rangle^2 - N$ for the other three groups. The same sum shows up for double product terms. Overall, we have 
\begin{align*}
0 & \leq 6\left(N+3\left(\sum\limits_{i,j=1}^N \langle x_i,x_j \rangle^2 - N\right)\right) -\frac 4 d 3N \sum\limits_{i,j=1}^N \langle x_i,x_j \rangle^2 + \frac 6 {d^2} N^3 \\
&= \left(18-\frac {12N} d\right) \sum\limits_{i,j=1}^N \langle x_i,x_j \rangle^2 - 12N + \frac {6N^3} {d^2}.
\end{align*}

If $18-\frac {12N} d$ is non-negative, then $N\leq \frac 3 2 d$ so the statement of the theorem is true. If it is negative, we note that $\sum\limits_{i,j=1}^N \langle x_i,x_j \rangle^2$ is the frame energy of the set and must be at least $\frac {N^2} d$ by the discrete version of Lemma \ref{lem:frame}. Substituting this value in the inequality, we obtain
$$\left(18-\frac {12N} d\right) \frac {N^2} d - 12N + \frac {6N^3} {d^2} \geq 0.$$
Dividing by $6N$ and factoring, we reach the inequality
$$(2-N/d)(N/d-1)\geq 0$$
that immediately implies the statement of the theorem.

Finally, we note that the size of $\omega_N$  is exactly $2d$ only when it is nearly orthogonal, i.e. a Rosenfeld set (and the converse is obvious).  
\end{proof}

Further analyzing the case of equality in the proof of the  theorem above, we observe that the frame energy of  the set $\omega_N$  is necessarily $N^2/d$, which according to \eqref{eq:framediscrete} means that $\omega_N$ has to be a tight frame. 

\begin{corollary}\label{cor:rosenframe}
Each nearly orthogonal set on $\mathbb S^{d-1}$ of size $2d$ (Rosenfeld set) is   a tight frame. 
\end{corollary}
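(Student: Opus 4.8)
The plan is to read off the conclusion from the equality analysis already present in the proof of Theorem \ref{thm:rosen}, specializing to $N=2d$. The crucial point is that for a nearly orthogonal set the step in which the triple-product terms with pairwise distinct indices were discarded is not merely valid but is an exact identity: by the definition of near orthogonality, among any three distinct vectors at least two are orthogonal, so $\langle x_i,x_j\rangle\langle x_i,x_k\rangle\langle x_j,x_k\rangle=0$ whenever $i,j,k$ are pairwise distinct. Hence no quantity is actually thrown away, and the inequality
$$\left(18-\frac{12N}{d}\right)\sum_{i,j=1}^N\langle x_i,x_j\rangle^2 - 12N + \frac{6N^3}{d^2}\geq 0$$
derived there holds for any Rosenfeld set $\omega_N$.

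Next I would substitute $N=2d$. At this value the coefficient $18-12N/d$ equals $-6$, and the display rearranges to $\sum_{i,j}\langle x_i,x_j\rangle^2\leq 4d=N^2/d$. This is precisely the reverse of the discrete frame bound \eqref{eq:framediscrete} (the discrete form of Lemma \ref{lem:frame}), which asserts $\sum_{i,j}\langle x_i,x_j\rangle^2\geq N^2/d$. The two inequalities together force the equality $\sum_{i,j}\langle x_i,x_j\rangle^2=N^2/d$, and by the equality characterization in \eqref{eq:framediscrete} this means exactly that $\omega_N$ is a unit norm tight frame.

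There is no serious obstacle in this argument; it is essentially bookkeeping of the equality case. The one point demanding attention is the sign of the coefficient $18-12N/d$: it is nonnegative for $N\leq 3d/2$, where it was used in the theorem to bound $N$ directly, but it is negative at $N=2d$, which is what converts the same algebraic relation into an upper bound on the frame energy $\sum_{i,j}\langle x_i,x_j\rangle^2$. The function of the near-orthogonality hypothesis is solely to promote the discarding step to an equality, so that the resulting upper bound is sharp enough to collide with the frame lower bound and pin down equality.
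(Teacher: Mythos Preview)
Your proof is correct and follows essentially the same route as the paper: the paper simply remarks that, analyzing the equality case in the chain of inequalities from the proof of Theorem~\ref{thm:rosen} at $N=2d$, the frame energy is forced to equal $N^2/d$, whence $\omega_N$ is a tight frame by \eqref{eq:framediscrete}. Your write-up makes the bookkeeping explicit---that near orthogonality turns the discarding step into an identity and that the negative coefficient $18-12N/d=-6$ converts the resulting inequality into the upper bound $\sum_{i,j}\langle x_i,x_j\rangle^2\le 4d$---but the underlying argument is the same.
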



Theorem \ref{thm:rosen} mirrors the classic packing bound: the size of a configuration in $\mathbb{S}^{d-1}$ with $\langle x,y \rangle \leq 0$ for any two distinct points is no greater than $2d$ and the set of vertices of the $d$-dimensional crosspolytope is the unique (up to orthogonal transformations) set satisfying this bound \cite{A,Sz,Ran}. This packing bound complements the initial result of Davenport and Haj\'{o}s who showed that the size of a configuration in $\mathbb{S}^{d-1}$ with $\langle x,y \rangle< 0$ for any two distinct points is no greater than $d+1$ \cite{DH}. The result of Davenport and Haj\'{o}s can be easily extended to the three-point case.

\begin{theorem}\label{thm:hd}
Assume for any three distinct points $x, y, z$ of the set $\omega_N \subset\mathbb{S}^{d-1}$,
$$\langle x,y \rangle \langle x,z \rangle \langle y,z \rangle < 0.$$
Then $N \leq d+1$.
\end{theorem}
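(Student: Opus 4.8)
The plan is to reduce the claim to the two-point theorem of Davenport and Haj\'os cited above---a configuration on $\mathbb{S}^{d-1}$ with strictly negative pairwise inner products has at most $d+1$ points---by means of a reflection, after first pinning down the sign pattern that the hypothesis forces. Since there is nothing to prove when $N\le 2$, I assume $N\ge 3$.

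First I would record two elementary consequences of the condition $\langle x,y\rangle\langle x,z\rangle\langle y,z\rangle<0$ holding for every triple: no inner product between distinct points can vanish, and in each triple an odd number (one or three) of the pairwise inner products is negative. Assigning to each pair of distinct points $x_i,x_j$ the element $a_{ij}\in\mathbb{F}_2$ equal to $0$ when $\langle x_i,x_j\rangle>0$ and $1$ when $\langle x_i,x_j\rangle<0$, the second consequence reads $a_{ij}+a_{ik}+a_{jk}=1$ in $\mathbb{F}_2$ for all distinct $i,j,k$. The next step is to solve this linear system over $\mathbb{F}_2$: fixing one point $x_1$, the relation forces $a_{ij}=1+a_{1i}+a_{1j}$ for all $i,j\neq 1$, and a direct check shows the solution is a \emph{two-cluster} pattern. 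Setting $A=\{x_1\}\cup\{x_i:\langle x_1,x_i\rangle<0\}$ and letting $B$ be its complement, one finds $\langle x,y\rangle<0$ whenever $x,y$ lie in the same cluster and $\langle x,y\rangle>0$ whenever they lie in different clusters (one of $A,B$ may be empty).

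Finally I would reflect one cluster through the origin, replacing each $y\in B$ by $-y$. Inner products inside $A$ are unchanged and stay negative; inner products inside $B$ are unchanged as well, since $\langle -y,-y'\rangle=\langle y,y'\rangle<0$; and the cross inner products flip sign, $\langle x,-y\rangle=-\langle x,y\rangle<0$. Thus $A\cup(-B)$ is a configuration on $\mathbb{S}^{d-1}$ with strictly negative pairwise inner products. In particular its $N$ points are distinct: two equal points would produce inner product $+1$, and the only dangerous coincidence $x=-y$ with $x\in A$, $y\in B$ is excluded precisely because $\langle x,y\rangle>0$ rules out $\langle x,y\rangle=-1$. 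Applying the Davenport--Haj\'os bound \cite{DH} to these $N$ points yields $N\le d+1$.

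The main obstacle is the middle step: proving that the parity condition forces the rigid two-cluster sign structure rather than some more intricate pattern; everything hinges on this $\mathbb{F}_2$-linear-algebra computation, after which the reflection and the appeal to \cite{DH} are routine. It is worth noting the parallel with the two-point picture, where relaxing $\langle x,y\rangle<0$ to $\langle x,y\rangle\le 0$ raises the bound from $d+1$ (a simplex) to $2d$ (the cross-polytope); the strict three-point inequality here plays the same role, giving $d+1$ in contrast to the bound $2d$ of Theorem \ref{thm:rosen}.
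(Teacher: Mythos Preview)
Your proof is correct and follows the same overall strategy as the paper: exploit the invariance of the triple-product condition under $y\mapsto -y$ to arrange that all pairwise inner products are negative, then invoke Davenport--Haj\'os. The paper's execution is more direct, however, and sidesteps what you flag as the ``main obstacle.'' Rather than first classifying the full sign pattern via the $\mathbb{F}_2$ relation $a_{ij}+a_{ik}+a_{jk}=1$ and then reflecting one cluster, the paper simply fixes a point $x\in\omega_N$ and replaces each $y$ with $\langle x,y\rangle>0$ by $-y$, so that $\langle x,y\rangle<0$ for every $y\neq x$; the triple condition applied to $x,y,z$ then forces $\langle y,z\rangle<0$ for all remaining pairs immediately. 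In your language this amounts to performing the reflection first and landing directly in the case $B=\emptyset$, so the two-cluster analysis, while correct, is not needed.
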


\begin{proof}
Fix a point $x \in \omega_N$. Since changing any point $y \in \omega_N$ to its opposite $-y$ does not affect the condition of the theorem, we may, without loss of generality, assume that $\langle x, y\rangle <0$ for each $y \in \omega_N \setminus \{ x \}$.  The condition $\langle x,y \rangle \langle x,z \rangle \langle y,z \rangle < 0$ then implies that $\langle y, z\rangle <0$ for any distinct  $y, z  \in \omega_N \setminus \{ x \}$. Thus, all the inner products  in $\omega_N$ are negative. By the result of Davenport and Haj\'{o}s \cite{DH} mentioned above, the size of the set is no greater than $d+1$.
\end{proof}

Finally, it is a simple fact that if  $\langle x, y\rangle < - \varepsilon$ for each distinct $x,y \in \omega_N$, then $N \le 1 + \frac{1}{\varepsilon}$, which is independent of the dimension $d$. This can be quickly derived from the inequality $\| \sum x_i \|^2 \ge 0$. This fact also easily generalizes to the multivariate setting. 

\begin{lemma}
Let $\varepsilon >0$. Assume for any three distinct points $x, y, z$ of the set $\omega_N \subset\mathbb{S}^{d-1}$,
$$\langle x,y \rangle \langle x,z \rangle \langle y,z \rangle \le  - \varepsilon.$$
Then $N \leq 1 + \frac{1}{\varepsilon}$.
\end{lemma}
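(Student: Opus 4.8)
The plan is to mimic the two-point argument, which expands $0\le\big\|\sum_i x_i\big\|^2$ and uses that every off-diagonal inner product is $\le-\varepsilon$. Here the pairwise inner products need not be negative, so instead I would flip the signs of the vectors to make them negative and then run the same computation on $\big\|\sum_i\varepsilon_i x_i\big\|^2$ for a well-chosen sign vector $\varepsilon\in\{-1,+1\}^N$. The two ingredients needed are a uniform lower bound on the magnitudes $|\langle x_i,x_j\rangle|$ and the existence of a single consistent choice of signs.

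First I would record two elementary consequences of the hypothesis, using the standing assumption $N\ge 3$. For any two distinct points $x_i,x_j$ we may pick a third point $x_k$, and then
$$
\varepsilon\le |\langle x_i,x_j\rangle\,\langle x_i,x_k\rangle\,\langle x_j,x_k\rangle|\le |\langle x_i,x_j\rangle|,
$$
since the remaining two factors have absolute value at most $1$. Thus $|\langle x_i,x_j\rangle|\ge\varepsilon$ for every pair; in particular no inner product vanishes, which also forces $\varepsilon\le 1$ and rules out coincident points. Second, for any three distinct points the product of the three inner-product signs equals $\sgn\big(\langle x,y\rangle\langle x,z\rangle\langle y,z\rangle\big)=-1$.

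Next I would use the second fact to build the signs. Fixing a base point $x_1$, set $\varepsilon_1=1$ and $\varepsilon_j=-\sgn\langle x_1,x_j\rangle$ for $j\ne 1$. For distinct $i,j\ne 1$ the triangle $\{1,i,j\}$ gives $\sgn\langle x_1,x_i\rangle\,\sgn\langle x_1,x_j\rangle\,\sgn\langle x_i,x_j\rangle=-1$, hence $\varepsilon_i\varepsilon_j=-\sgn\langle x_i,x_j\rangle$ and therefore $\varepsilon_i\varepsilon_j\langle x_i,x_j\rangle=-|\langle x_i,x_j\rangle|\le-\varepsilon$; the pairs involving the base point are immediate from the definition. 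I expect this consistency step — that one global choice of signs simultaneously makes \emph{all} off-diagonal products negative — to be the only real point of the argument: it is exactly where the three-point hypothesis (through the sign of each triangle) does the work, and it is the analogue of the balance criterion for the signed complete graph on $\omega_N$.

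Finally I would expand
$$
0\le \Big\|\sum_{i=1}^N \varepsilon_i x_i\Big\|^2 = N + \sum_{i\ne j}\varepsilon_i\varepsilon_j\langle x_i,x_j\rangle \le N-\varepsilon N(N-1),
$$
using $\|x_i\|=1$ on the diagonal and the bound $\varepsilon_i\varepsilon_j\langle x_i,x_j\rangle\le-\varepsilon$ on each of the $N(N-1)$ off-diagonal terms. Dividing by $N$ gives $1-\varepsilon(N-1)\ge 0$, that is, $N\le 1+\tfrac{1}{\varepsilon}$, exactly matching the two-point bound.
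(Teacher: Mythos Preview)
Your proof is correct and follows essentially the same route as the paper: the paper also flips signs (citing the argument of its Theorem~3.2, which is exactly your choice $\varepsilon_j=-\sgn\langle x_1,x_j\rangle$) to force all pairwise inner products negative, then observes $|\langle x,y\rangle\langle x,z\rangle\langle y,z\rangle|\le|\langle x,y\rangle|$ to get $\langle x,y\rangle\le-\varepsilon$ and invokes the two-point bound $\|\sum x_i\|^2\ge 0$. You have simply written out both steps explicitly rather than citing them.
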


\begin{proof}
As the value of $\langle x,y \rangle \langle x,z \rangle \langle y,z \rangle$ doesn't change when any of the vectors is changed to its opposite, arguing as in the proof of Theorem \ref{thm:hd}, we may assume that all inner products between distinct elements of $\omega_N$ are negative. But $ |\langle x,y \rangle \langle x,z \rangle \langle y,z \rangle | \le | \langle x,y \rangle |$, therefore $\langle x, y\rangle < - \varepsilon$ for each distinct $x,y \in \omega_N$. Thus, the aforementioned fact implies the statement of the theorem. 
\end{proof}

The  Erd\H{o}s problem on nearly orthogonal sets gave rise to active  investigations of orthogonal representations of graphs, see e.g. \cite{Dea}.
Since the proof of Theorem  \ref{thm:rosen} demonstrates that the machinery of semidefinite bounds is effective for the Erd\H{o}s problem, it would be interesting  to find out whether any other problems regarding orthogonal representations of graphs can be solved in a similar manner.  A clear obstacle to this approach is the complexity of functions  \eqref{eq:BachocValQKernel} involved in $k$-point semidefinite bounds.

\section{\texorpdfstring{Minimal energy for multivariate $p$-frame potentials}{Minimal energy for multivariate p-frame potentials}}\label{sec:pframe}

We now turn our attention to the multivariate $p$-frame energy, i.e. the energy with the potential  $K(x,y,z) = | \langle x,y \rangle \langle x,z \rangle \langle y,z \rangle|^p = |uvt|^p $ for $p>0$.  {While this section primarily focuses on  the case $p \leq 1$, we first  quickly address the case when the $p$-frame potential is a polynomial. Corollary 5.2 in \cite{BFGMPV}  implies 
\begin{proposition}
If $p \in 2 \mathbb{N}$, then $\sigma$ minimizes the three-point $p$-frame energy.
\end{proposition}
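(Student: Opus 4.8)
The plan is to use the fact that for $p \in 2\mathbb{N}$ the potential is not merely continuous but factors through a positive definite two-point kernel. Since $p$ is an even integer, $|uvt|^p = (uvt)^p = \langle x,y\rangle^p\langle x,z\rangle^p\langle y,z\rangle^p$, and the feature map $\Phi(x) = x^{\otimes p}$ into the finite-dimensional space of symmetric $p$-tensors satisfies $\langle \Phi(x),\Phi(y)\rangle = \langle x,y\rangle^p$ and $\|\Phi(x)\|^2 = \langle x,x\rangle^p = 1$ on $\mathbb{S}^{d-1}$. First I would record this factorization, which rewrites the kernel as $K(x,y,z) = \langle \Phi(x),\Phi(y)\rangle\,\langle \Phi(y),\Phi(z)\rangle\,\langle \Phi(z),\Phi(x)\rangle$.

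The key step is to express the energy as a spectral functional of a single operator. For $\mu\in\mathcal{P}(\mathbb{S}^{d-1})$ set $G_\mu = \int_{\mathbb{S}^{d-1}} \Phi(x)\Phi(x)^T\, d\mu(x)$, a positive semidefinite matrix with $\Tr G_\mu = \int \|\Phi(x)\|^2 d\mu = 1$. Writing each inner product through $\Phi$ and integrating in the three variables separately, a direct computation gives the identity $I_K(\mu) = \Tr\big(G_\mu^3\big)$. The map $\mu\mapsto G_\mu$ is affine (indeed linear), which is what makes the next step work.

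Next I would combine convexity with rotational symmetry to conclude directly, without even invoking the existence of a minimizer. Since $t\mapsto t^3$ is convex on $[0,\infty)$ and the $G_\mu$ are positive semidefinite, the functional $G\mapsto \Tr(G^3)$ is convex on positive semidefinite matrices; precomposing with $\mu\mapsto G_\mu$ makes $\mu\mapsto I_K(\mu)$ convex on $\mathcal{P}(\mathbb{S}^{d-1})$. Moreover $I_K$ is $O(d)$-invariant, as $K(gx,gy,gz)=K(x,y,z)$ for every $g\in O(d)$. For an arbitrary $\mu$, the Haar average $\bar\mu = \int_{O(d)} g_*\mu\, d\lambda(g)$ is a rotation-invariant probability measure and hence equals $\sigma$. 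By linearity, $G_{\bar\mu} = \int_{O(d)} G_{g_*\mu}\, d\lambda(g)$, so convexity of $\Tr(\cdot^3)$ gives $\Tr(G_{\bar\mu}^3)\le \int_{O(d)} \Tr(G_{g_*\mu}^3)\, d\lambda(g)$; combined with invariance this yields
\[
I_K(\sigma) = I_K(\bar\mu) \le \int_{O(d)} I_K(g_*\mu)\, d\lambda(g) = I_K(\mu),
\]
which is the desired bound for every $\mu \in \mathcal{P}(\mathbb{S}^{d-1})$, i.e. $\sigma$ is a minimizer.

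The proof is short once set up, so there is no serious obstacle; the only real content is the identity $I_K(\mu)=\Tr(G_\mu^3)$ together with the observation that it forces convexity, after which the Haar-averaging argument is routine. I expect the care to go into the convexity of the trace functional and the integral form of Jensen's inequality used in the averaging step. Alternatively, the same conclusion fits the semidefinite framework of this paper: $(uvt)^p - I_K(\sigma)$ is a symmetric polynomial that can be expanded in the Bachoc--Vallentin basis $S_m^d$, and Theorem \ref{thm:SemiDefMin} then applies provided the coefficient matrices $A_m$ are positive semidefinite --- and that positivity is precisely what the factorization of $K$ through $\Phi$ encodes.
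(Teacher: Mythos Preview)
Your argument is correct. The identity $I_K(\mu)=\Tr(G_\mu^3)$ with $G_\mu=\int\Phi(x)\Phi(x)^T\,d\mu$ and $\Phi(x)=x^{\otimes p}$ is exactly right, the convexity of $G\mapsto\Tr(G^3)$ on positive semidefinite matrices follows from the standard fact that $A\mapsto\Tr f(A)$ is convex whenever $f$ is convex on an interval containing the spectrum, and the Haar-averaging step is clean.

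As for the comparison: the paper does not actually prove this proposition---it simply invokes Corollary~5.2 of \cite{BFGMPV}. That said, your approach is very much in the spirit of this paper: Proof~1 of Lemma~\ref{lem:uvt} carries out precisely the same computation for the kernel $uvt$, with $\Phi(x)=x$ and $A=\int xx^T\,d\mu$, obtaining $I_{uvt}(\mu)=\Tr(A^3)$ and then bounding below via H\"older. Your argument is the natural extension to $\Phi(x)=x^{\otimes p}$. The one genuine difference is that you replace the H\"older step by convexity plus rotational averaging; this is not merely cosmetic. For $p\ge 2$ the matrix $G_\sigma$ acts on symmetric $p$-tensors, where the $O(d)$-representation is reducible, so $G_\sigma$ is not a scalar multiple of the identity and the H\"older bound $\Tr(G^3)\ge(\Tr G)^3/D^2$ (with $D=\dim\operatorname{Sym}^p\mathbb{R}^d$) would not be sharp at $\sigma$. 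Your averaging argument sidesteps this and lands directly on $I_K(\sigma)$, which is what is needed.
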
}

As the first step in understanding the set of minimizers for $p=1$, we give a description of minimizing measures for  the potential  $uvt$.

\begin{lemma}\label{lem:uvt}
For any $\mu\in\mathcal{P}(\mathbb{S}^{d-1})$,
$$I_{uvt} (\mu ) = \int_{\mathbb{S}^{d-1}}\int_{\mathbb{S}^{d-1}}\int_{\mathbb{S}^{d-1}} \langle x,y \rangle  \langle y,z \rangle\, \langle  z, x \rangle d\mu(x) d\mu(y) d\mu(z) \geq \frac 1 {d^2}$$
and the equality holds if and only if $\mu$ is isotropic.
\end{lemma}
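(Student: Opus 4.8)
The plan is to reduce the triple integral to a simple spectral quantity of the second-moment matrix
$$M := \int_{\mathbb{S}^{d-1}} x x^T \, d\mu(x),$$
which is symmetric, positive semidefinite, and satisfies $\Tr(M) = \int_{\mathbb{S}^{d-1}} \|x\|^2 \, d\mu(x) = 1$ since $\mu$ is a probability measure supported on the unit sphere. Observe at the outset that the isotropy condition \eqref{eqn:isotropic_init} is \emph{precisely} the statement $M = \tfrac{1}{d} I_d$, so the claimed equality case should turn into a statement about the eigenvalues of $M$.

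First I would evaluate the integral one variable at a time using Fubini. Integrating over $y$ gives $\int_{\mathbb{S}^{d-1}} \langle x,y\rangle \langle y,z\rangle \, d\mu(y) = x^T M z$; substituting this and integrating over $x$ (writing $\langle z,x\rangle = z^T x$) gives $\int_{\mathbb{S}^{d-1}} (z^T x)(x^T M z) \, d\mu(x) = z^T M^2 z$; and a final integration over $z$ produces $\int_{\mathbb{S}^{d-1}} z^T M^2 z \, d\mu(z) = \Tr(M^3)$. Hence
$$I_{uvt}(\mu) = \Tr\bigl(M^3\bigr).$$

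It then remains to minimize $\Tr(M^3) = \sum_{i=1}^d \lambda_i^3$ over the eigenvalues $\lambda_1,\dots,\lambda_d \geq 0$ of $M$ subject to the single constraint $\sum_i \lambda_i = \Tr(M) = 1$. Since $t \mapsto t^3$ is convex on $[0,\infty)$, Jensen's inequality yields $\tfrac{1}{d}\sum_i \lambda_i^3 \geq \bigl(\tfrac{1}{d}\sum_i \lambda_i\bigr)^3 = d^{-3}$, that is, $\Tr(M^3) \geq 1/d^2$. Because $t^3$ is not affine on any nondegenerate subinterval of $[0,\infty)$, equality in Jensen forces all the $\lambda_i$ to coincide, hence $\lambda_i = 1/d$ for every $i$ and $M = \tfrac{1}{d} I_d$; by the observation above, this is exactly isotropy.

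I do not expect a serious obstacle. The only points demanding care are the justification of reordering the integrals (harmless, as the integrand is continuous and bounded on a compact product space) and the equality analysis in Jensen's inequality, where one must rule out nontrivial eigenvalue distributions by invoking strict convexity of $t^3$ away from the origin.
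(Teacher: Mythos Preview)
Your proof is correct and is essentially the paper's first proof (the linear algebra argument): you introduce the same second-moment matrix $M=\int xx^T\,d\mu$, identify $I_{uvt}(\mu)=\Tr(M^3)$, and then bound $\Tr(M^3)\ge 1/d^2$ via a convexity/power-mean inequality on the eigenvalues, with equality iff $M=\tfrac{1}{d}I_d$. The only cosmetic difference is that you phrase the eigenvalue inequality as Jensen for $t\mapsto t^3$ whereas the paper phrases it as H\"older, and the paper also records a second, independent proof via the semidefinite programming kernels $S_{0,2,2}^d$ and $S_{1,1,1}^d$.
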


\begin{proof} We shall give two different proofs of this fact, as both are quite instructive: one based on linear algebra and another one based on the semidefinite programming bounds of Theorem \ref{thm:SemiDefMin}. 

\noindent {\it{Proof 1 (linear algebra).}} If we denote $ x = \big( x^{(1)}, \dots, x^{(d)} \big)$, then 
$\displaystyle{uvt = \sum_{k,l,m = 1}^d x^{(k)} x^{(l)} y^{(l)} y^{(m)} z^{(m)} z^{(k)}}$.  Therefore, setting $a_{k,l} = \int_{\mathbb S^{d-1}}  x^{(k)} x^{(l)} d\mu (x)$, we see that $$ I_{uvt} (\mu ) =  \sum_{k,l,m = 1}^d a_{k,l} a_{l,m} a_{m,k} = \operatorname{Tr} (A^3),$$ where $A = (a_{k,l}) = \int_{\mathbb S^{d-1}} x x^T d\mu(x)$ is obviously positive semidefinite. 

For any positive semidefinite $d\times d $ matrix $A$ with eigenvalues $\lambda_i \ge 0$, using H\"{o}lder's inequality, one obtains $$ \operatorname{Tr} (A) = \sum_{i=1}^d \lambda_i \le  \left( \sum_{i=1}^d \lambda_i^3  \right)^{\frac{1}{3}} \cdot d^{\frac{2}{3}} = \Big( \operatorname{Tr} (A^3) \Big)^{\frac{1}{3}} \cdot d^{\frac{2}{3}} , \,\,\, \textup{ i.e. } \,\,  \operatorname{Tr} (A^3) \ge \frac{( \operatorname{Tr} (A))^3}{d^2},  $$
with equality if and only if $A$ is a multiple of the identity. 

In our case, $\displaystyle{\operatorname{Tr} (A) = \sum_{i=1}^d \int_{\mathbb S^{d-1}} \Big( x^{(i)} \Big)^2 d\mu (x) = 1}$, therefore $I_{uvt} (\mu ) =   \operatorname{Tr} (A^3) \ge \frac{1}{d^2}$, and the equality is achieved if and only if $ A = \frac{1}{d} I_d$, i.e. $\mu$ is isotropic. \\

\noindent {\it{Proof 2 (semidefinite programming).}} From the proof of Theorem \ref{thm:rosen}, we see that
$$uvt=\frac{(d-1)^2}{d^2} S_{0,2,2}^d +  S_{1,1,1}^d+\frac 2 {3d} (u^2+v^2+t^2)-\frac 1 {d^2}.$$
When integrating the right hand side, the values for $S_{0,2,2}^d$ and $S_{1,1,1}^d$ are non-negative due to Theorem \ref{thm:SemiDefMin}. The integral of $u^2+v^2+t^2$ is at least $\frac 3 d$ by Lemma \ref{lem:frame}. Overall, we get
$$\int_{\mathbb{S}^{d-1}}\int_{\mathbb{S}^{d-1}}\int_{\mathbb{S}^{d-1}} \langle x,y \rangle \langle x,z \rangle \langle y,z \rangle\, d\mu(x) d\mu(y) d\mu(z) \geq \frac 2 {3d} \cdot \frac 3 d - \frac 1 {d^2} = \frac 1 {d^2}.$$
The equality can hold only when $\mu$ is isotropic due to Lemma \ref{lem:frame}. On the other hand,  integrals of both $S_{0,2,2}^d$ and $S_{1,1,1}^d$ vanish on isotropic measures, so the energy is precisely $\frac 1 {d^2}$ for all isotropic measures.
\end{proof}
We remark that this result (although without the full characterization of minimizers) has also been proved in Corollary 5.2  of \cite{BFGMPV} by a different method.  

As a direct consequence of Lemma \ref{lem:uvt}, we can describe minimizers of the $1$-frame potential as follows. 

\begin{corollary}\label{cor:$1$-frame}
For   any $\mu\in\mathcal{P}(\mathbb{S}^{d-1})$,

$$\int_{\mathbb{S}^{d-1}}\int_{\mathbb{S}^{d-1}}\int_{\mathbb{S}^{d-1}} |\langle x,y \rangle \langle x,z \rangle \langle y,z \rangle|\, d\mu(x) d\mu(y) d\mu(z) \geq \frac 1 {d^2}$$
and the equality holds if and only if $\mu$ is isotropic and $\langle x,y \rangle \langle x,z \rangle \langle y,z \rangle \geq 0$ for any points $x,y,z$ in the support of $\mu$.
\end{corollary}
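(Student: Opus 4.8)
The plan is to obtain both the bound and the equality characterization directly from Lemma \ref{lem:uvt}, using only the elementary pointwise inequality $|uvt| \ge uvt$.

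First I would integrate this pointwise inequality against $d\mu(x)\,d\mu(y)\,d\mu(z)$, which immediately gives
$$ I_{|uvt|}(\mu) \ge I_{uvt}(\mu) \ge \frac{1}{d^2}, $$
where the second inequality is exactly the content of Lemma \ref{lem:uvt}. This settles the lower bound, so it remains to analyze when equality holds throughout.

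For the equality case, observe that $I_{|uvt|}(\mu) = \frac{1}{d^2}$ forces both inequalities above to be equalities. The equality $I_{uvt}(\mu) = \frac{1}{d^2}$ is, by Lemma \ref{lem:uvt}, equivalent to $\mu$ being isotropic. The equality $I_{|uvt|}(\mu) = I_{uvt}(\mu)$ means
$$ \int_{\mathbb{S}^{d-1}}\int_{\mathbb{S}^{d-1}}\int_{\mathbb{S}^{d-1}} \big( |uvt| - uvt \big)\, d\mu(x)\,d\mu(y)\,d\mu(z) = 0, $$
and since the integrand is nonnegative, it must vanish $\mu\times\mu\times\mu$-almost everywhere.

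The one point requiring care — and the only real step beyond invoking Lemma \ref{lem:uvt} — is upgrading this almost-everywhere statement to the pointwise conclusion on the support. Here I would use that the integrand $|uvt| - uvt$ is a continuous, nonnegative function on $(\mathbb{S}^{d-1})^3$, and that the support of the product measure is $(\supp \mu)^3$. If $uvt$ were negative at some triple $(x_0,y_0,z_0) \in (\supp\mu)^3$, then by continuity $|uvt| - uvt$ would be strictly positive on an open neighborhood of $(x_0,y_0,z_0)$, which has positive $\mu\times\mu\times\mu$-measure because $(x_0,y_0,z_0)$ lies in the support; this contradicts the almost-everywhere vanishing. Hence $uvt \ge 0$ for all $x,y,z \in \supp\mu$. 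The converse is immediate: if $\mu$ is isotropic and $uvt \ge 0$ on $(\supp\mu)^3$, then $|uvt| = uvt$ on the support, so $I_{|uvt|}(\mu) = I_{uvt}(\mu) = \frac{1}{d^2}$ by Lemma \ref{lem:uvt}.
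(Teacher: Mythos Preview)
Your proof is correct and is exactly the argument the paper has in mind: the corollary is stated there simply as ``a direct consequence of Lemma~\ref{lem:uvt}'' with no further proof given. Your write-up merely spells out the obvious steps, with the added care of the continuity/support argument for the equality case, which the paper leaves implicit.
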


Using this corollary, we can solve the problem for $0<p<1$ and show that the orthonormal basis is a unique (up to central symmetry and rotations) minimizer.

\begin{theorem}\label{thm:p<1}
For   $0<p<1$,
$$\int_{\mathbb{S}^{d-1}}\int_{\mathbb{S}^{d-1}}\int_{\mathbb{S}^{d-1}} |\langle x,y \rangle \langle x,z \rangle \langle y,z \rangle|^p\, d\mu(x) d\mu(y) d\mu(z) \geq \frac 1 {d^2}$$
and the equality holds if and only if $\mu$ is a uniform distribution over an orthonormal basis (up to central symmetry).
\end{theorem}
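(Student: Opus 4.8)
The plan is to deduce the case $0<p<1$ from the already-established $p=1$ statement of Corollary \ref{cor:$1$-frame} by a pointwise comparison of potentials, and then to read off the minimizers from the resulting equality conditions. Since each inner product lies in $[-1,1]$, we always have $|uvt| = |\langle x,y\rangle\langle x,z\rangle\langle y,z\rangle| \le 1$. For $0<p<1$ the elementary inequality $s^p \ge s$ holds for all $s \in [0,1]$, with equality only at $s \in \{0,1\}$. Applying this with $s = |uvt|$ gives the pointwise bound $|uvt|^p \ge |uvt|$, so integrating against $d\mu\,d\mu\,d\mu$ and invoking Corollary \ref{cor:$1$-frame} yields
$$ I_{|uvt|^p}(\mu) \ge I_{|uvt|}(\mu) \ge \frac{1}{d^2}, $$
which is the desired inequality. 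It remains to analyze equality.

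Equality forces two separate conditions. First, the integrated pointwise inequality must itself be an equality, so $|uvt|^p = |uvt|$ for $\mu^3$-almost every triple $(x,y,z)$; by the equality case of $s^p \ge s$ this means $|uvt| \in \{0,1\}$ for $\mu^3$-a.e.\ triple. Second, equality must simultaneously hold in Corollary \ref{cor:$1$-frame}, whence $\mu$ is isotropic and $uvt \ge 0$ for every triple of points in $\operatorname{supp}\mu$ (the latter being an everywhere statement on the support by continuity of $uvt$).

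The crux is to show that these constraints force $\operatorname{supp}\mu$ to consist of mutually orthogonal lines. I would first argue that no two support points $x,y$ can satisfy $0 < |\langle x,y\rangle| < 1$. Suppose otherwise; by continuity choose neighborhoods $U\ni x$, $V\ni y$ and $\delta>0$ with $|\langle x',y'\rangle| \in [\delta, 1-\delta]$ for all $x'\in U$, $y'\in V$, so that $\mu(U),\mu(V)>0$. For any such $x',y'$ and any $z$ we then have $|uvt| \le |\langle x',y'\rangle| \le 1-\delta < 1$, so the condition $|uvt|\in\{0,1\}$ forces $uvt=0$, i.e.\ $z \in (x')^\perp \cup (y')^\perp$. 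By Fubini, for $\mu^2$-a.e.\ pair $(x',y') \in U\times V$ one gets $\mu\big((x')^\perp\cup(y')^\perp\big)=1$; choosing such a pair with $x',y' \in \operatorname{supp}\mu$ and using that $(x')^\perp\cup(y')^\perp$ is closed while its complement is an open $\mu$-null set disjoint from $\operatorname{supp}\mu$, we conclude $\operatorname{supp}\mu \subseteq (x')^\perp\cup(y')^\perp$. But $x' \in \operatorname{supp}\mu$, while $\langle x',x'\rangle = 1$ and $|\langle x',y'\rangle|\ge\delta$ give $x'\notin (x')^\perp\cup(y')^\perp$, a contradiction.

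Hence any two support points are orthogonal or antipodal, so the lines they span are mutually orthogonal and there are at most $d$ of them. Writing $\int xx^T\,d\mu$ in an orthonormal basis adapted to these lines makes it diagonal with entries equal to the total $\mu$-mass of each line; the isotropy condition \eqref{eqn:isotropic_init} then forces exactly $d$ lines, each of mass $1/d$. Thus, up to central symmetry, $\mu$ is the uniform distribution over an orthonormal basis, and a direct computation confirms this measure attains the value $1/d^2$. The main obstacle is the measure-theoretic step in the previous paragraph: converting the almost-everywhere triple condition into an everywhere statement about the support, which rests on the Fubini reduction together with the fact that points of $\operatorname{supp}\mu$ have positive measure in every neighborhood while $(x')^\perp \cup (y')^\perp$ is a closed set of measure zero.
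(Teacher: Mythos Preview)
Your proof is correct and follows the same overall strategy as the paper: bound $|uvt|^p \ge |uvt|$ pointwise and invoke Corollary~\ref{cor:$1$-frame}. The only difference lies in the equality analysis, where you work considerably harder than necessary. Since $|uvt|^p - |uvt|$ is continuous and non-negative on $(\mathbb{S}^{d-1})^3$, having zero integral against $\mu^3$ forces it to vanish on $\supp(\mu^3) = (\supp\mu)^3$, so the condition $|uvt|\in\{0,1\}$ holds for \emph{every} triple of support points, not merely almost every one. The paper then simply evaluates at the ``diagonal'' triple $(x,x,y)$ with $x,y\in\supp\mu$, obtaining $|uvt| = \langle x,y\rangle^2$; forcing this into $\{0,1\}$ gives $\langle x,y\rangle\in\{0,\pm 1\}$ in one line and replaces your Fubini argument entirely. (Incidentally, your closing sentence misstates that $(x')^\perp\cup(y')^\perp$ has measure zero; in your own argument it has measure one, and it is its complement that is $\mu$-null.)
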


\begin{proof}
For $0<p<1$ and any $\mu\in\mathcal{P}(\mathbb{S}^{d-1})$,
$$|\langle x,y \rangle \langle x,z \rangle \langle y,z \rangle|^p\geq |\langle x,y \rangle \langle x,z \rangle \langle y,z \rangle|,$$
so the lower bound follows from Corollary \ref{cor:$1$-frame}. The bound is sharp if and only if $\mu$ minimizes the three-point $1$-frame energy and for all $x,y,z$ from $\supp(\mu)$, $ \langle x,y \rangle \langle x,z \rangle \langle y,z \rangle  =0$ or $1$. This condition is not satisfied for the triple $(x,x,y)$ if there are two points $x$, $y$ such that $\langle x,y \rangle \not\in \{ 0, -1, 1\}$. Therefore, any distinct points in $\supp(\mu)$ are orthogonal or antipodal. The support of an isotropic   $\mu$ must  span $\mathbb R^d$ so, due to (\ref{eqn:isotropic}), $\mu( \{ e_j, -e_j\}) = \frac{1}{d}$ for $j = 1, ..., d$, for some orthonormal basis $e_1, ..., e_d$.
\end{proof}

The main difficulty in solving the problem for $p=1$ is in characterizing all isotropic measures such that $\langle x,y \rangle \langle x,z \rangle \langle y,z \rangle \geq 0$ for any points $x,y,z$ in their support. Note that Rosenfeld sets are discrete miminizers of the $1$-frame energy as they are tight frames (Corollary \ref{cor:rosenframe}) and satisfy this condition. The general problem of describing all minimizers is highly nontrivial since, as mentioned above, even the problem of characterizing all Rosenfeld sets is wide-open for $d\geq 5$. However, we can provide a complete description of minimizers for $d=2$.

\begin{theorem}\label{thm:2-dim}
For any $\mu\in\mathcal{P}(\mathbb{S}^1)$,
$$\int_{\mathbb{S}^1}\int_{\mathbb{S}^1}\int_{\mathbb{S}^1} |\langle x,y \rangle \langle x,z \rangle \langle y,z \rangle|\, d\mu(x) d\mu(y) d\mu(z) \geq \frac 1 {4}$$
and the equality holds if and only if $\mu$ is a convex combination of uniform distributions over two (not necessarily distinct) orthonormal bases (up to central symmetry). 
\end{theorem}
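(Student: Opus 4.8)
The plan is to reduce the statement to a purely geometric characterization on the circle using Corollary~\ref{cor:$1$-frame}. Since $d=2$, that corollary already gives the bound $\frac14$, and equality holds exactly for measures $\mu$ that are isotropic and satisfy $\langle x,y\rangle\langle x,z\rangle\langle y,z\rangle\ge 0$ for all $x,y,z\in\supp(\mu)$. My first observation is that all three relevant conditions — the energy value, isotropy, and the sign of the triple product — are unchanged when any point is replaced by its antipode, since $xx^{T}$ and $|\langle x,y\rangle|$ are invariant and the triple product picks up two sign flips. Hence everything descends to the projective line $\mathbb{RP}^1=\mathbb{S}^1/\{\pm1\}$, which I parametrize by $[0,\pi)$ and then identify with a circle $\mathbb{S}^1$ via the doubling map $\theta\mapsto 2\theta$, transporting the projected measure to a measure $\nu$. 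I would then record two translations. Writing $x=(\cos\alpha,\sin\alpha)$, condition \eqref{eqn:isotropic} becomes $\int\cos\beta\,d\nu=\int\sin\beta\,d\nu=0$, i.e. $\nu$ is balanced. And computing the triple product in terms of the three gaps $g_1,g_2,g_3$ (summing to $\pi$) between three projective points gives $-\cos g_1\cos g_2\cos g_3$, so after doubling the sign condition $\ge 0$ becomes exactly: every three points of $\supp(\nu)$ lie in a closed semicircle. I will call this property (P).

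The heart of the argument is the claim that a balanced probability measure $\nu$ on $\mathbb{S}^1$ with property (P) is supported on at most two antipodal pairs. I split into cases according to whether $\supp(\nu)$ lies in a closed semicircle. If it does, writing the semicircle as $\{\sin\beta\ge 0\}$, the balancedness relation $\int\sin\beta\,d\nu=0$ has a nonnegative integrand, forcing $\supp(\nu)\subseteq\{(1,0),(-1,0)\}$, and then $\int\cos\beta\,d\nu=0$ equalizes the two masses, so $\nu$ is a single antipodal pair. Otherwise $0$ lies in the interior of $\operatorname{conv}(\supp(\nu))$. I first extract an antipodal pair: since $\nu$ is balanced, $0\in\operatorname{conv}(\supp(\nu))$, so by Carath\'eodory $0$ is a convex combination of at most three support points with positive weights; as three distinct circle points are never collinear, a genuine three-point combination would place $0$ in the interior of their triangle, making all three arcs $<\pi$ and contradicting (P). Hence $0$ is the midpoint of an antipodal pair, which I normalize to $p=(1,0)$, $-p=(-1,0)$.

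With $p,-p\in\supp(\nu)$, choose an off-axis support point $q$ at angle $\theta\in(0,\pi)$ (one exists since the support is not in a closed semicircle). The key computation is that (P), applied to the triples $\{p,q,r\}$ and $\{-p,q,r\}$, forbids any further support point $r$ from the open lower semicircle except at $-q$: each triple fails (P) precisely when $r$ lies in a prescribed open arc, and the two forbidden arcs together cover $(\pi,2\pi)\setminus\{\theta+\pi\}$. Since the support is not contained in the closed upper semicircle, there is a point with negative second coordinate, which is therefore forced to be $-q$; and running the same exclusion with any second off-axis point $r\ne q$ would force every lower point to equal $-r\ne-q$, contradicting $-q\in\supp(\nu)$. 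Thus $\supp(\nu)=\{p,-p,q,-q\}$, and balancedness then equalizes the masses within each pair, because the vanishing of $\int e^{i\beta}\,d\nu$ separates along the $\mathbb{R}$-independent directions $1$ and $e^{i\theta}$. Consequently $\nu$ is a convex combination of two measures of the form $\tfrac12(\delta_{w}+\delta_{-w})$, each of which lifts back to the uniform distribution over an orthonormal basis; undoing the antipodal identification is exactly the freedom of redistributing mass between each point and its antipode, which is the ``up to central symmetry'' clause, and the semicircle case yields two coinciding bases. The converse is routine: any such $\mu$ is a convex combination of isotropic measures, hence isotropic, and two antipodal pairs satisfy (P), so Corollary~\ref{cor:$1$-frame} certifies equality. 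I expect the main obstacle to be the forbidden-arc bookkeeping that rigidifies (P) into the ``two antipodal pairs'' structure; the trigonometric reductions and the final moment computation are comparatively mechanical.
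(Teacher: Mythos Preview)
Your argument is correct and genuinely different from the paper's. Both proofs start from Corollary~\ref{cor:$1$-frame}, reducing to the characterization of isotropic measures with nonnegative triple products, but the geometry is organized differently. The paper works directly on $\mathbb{S}^1$: it first shows that every support point $x$ must have an orthogonal counterpart (otherwise the support can be pushed into an open half-circle, and a midpoint argument against condition~\eqref{eqn:isotropic} yields a contradiction), then argues combinatorially that three pairwise non-orthogonal, non-opposite support points would force a triple with negative product; finally isotropy pins down the weights. You instead pass to $\mathbb{RP}^1$ and double, which converts isotropy into balancedness and the sign condition into the clean geometric property (P) that any three support points lie in a closed semicircle; Carath\'eodory then extracts an antipodal pair for free, and the forbidden-arc computation with $\{p,q,r\}$ and $\{-p,q,r\}$ rigidifies the support to two antipodal pairs.

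Your route trades the paper's ad hoc midpoint argument for a structural one (balanced measure $\Rightarrow$ antipodal pair via Carath\'eodory), and the reformulation as property (P) makes the final exclusion step transparent. The paper's argument, on the other hand, stays closer to the original objects and avoids introducing the doubling map. One minor presentational point: your sentence ``Otherwise $0$ lies in the interior of $\operatorname{conv}(\supp(\nu))$'' is true but never actually used; the Carath\'eodory step only needs $0\in\operatorname{conv}(\supp(\nu))$, which follows from balancedness alone, together with (P) to rule out the genuine three-point case.
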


\begin{proof}
Let $\mu$ be a minimizer and assume there is $x\in\supp(\mu)$ such that  $x^{\perp} \cap \supp(\mu) = \emptyset$. Note that the potential is invariant under central symmetry for any of its arguments. This means that we can rearrange the measure by switching points to their opposites so that $\supp(\mu)$ is entirely contained in an open half-circle centered at $x$. Assume the left endpoint of the support now is $x_1$ and the right endpoint is $x_2$. Since both $\langle x, x_1\rangle$ and $\langle x, x_2\rangle$ are positive, $\langle x_1,x_2\rangle \geq 0$, due to Corollary \ref{cor:$1$-frame}. Take the midpoint $y$ of a circular arc between $x_1$ and $x_2$. The angle between $x_1$ and $x_2$ is no greater than $\frac {\pi} 2$ so for any $z\in\supp(\mu)$, $\langle z, y\rangle \geq \frac 1 {\sqrt{2}}$. By (\ref{eqn:isotropic}),
$$\int_{\mathbb{S}^1} |\langle z, y\rangle|^2\,  d\mu(z)=\frac 1 2.$$
This may happen only if $\langle z, y\rangle$ is precisely $\frac 1 {\sqrt{2}}$ for any $z\in\supp(\mu)$ and $\langle x_1, x_2 \rangle =0$. However, $x$ cannot coincide with $x_1$ or $x_2$ because it does not have an orthogonal counterpart in the support and, therefore, $\langle x, y\rangle$ is definitely not $\frac 1 {\sqrt{2}}$ which contradicts our assumption.

We conclude that any $x\in\supp(\mu)$ has an orthogonal counterpart. Now we assume that there are at least three pairwise non-orthogonal and pairwise non-opposite points $x, y, z$ in the support of $\mu$. Each of them has an orthogonal couterpart $x^{\perp}$, $y^{\perp}$, $z^{\perp}\in\supp(\mu)$. We claim that among these six points, there are three with the negative product of pairwise scalar products. Without loss of generality, we can assume, by switching to opposites if needed, that pairwise angles between three of the points, say,  $x$, $y$, $z$ are all acute and $y$ is between $x$ and $z$ on the circle. Then $\langle x, y^{\perp}\rangle \langle x, z\rangle \langle y^{\perp}, z\rangle < 0$, a contradiction, so we may have no more than two pairwise non-orthogonal and pairwise non-opposite points.

Again switching to opposite points if necessary, we assume that the support of $\mu$ consists of only four points (the case of two points in handled later): $x$,  $y$, and their orthogonal counterparts $x^{\perp}$ and  $y^{\perp}$. Assume $\mu(x)=\alpha_1$, $\mu(x^{\perp})=\alpha_2$, $\mu(y)=\beta_1$, $\mu(y^{\perp})=\beta_2$. Using (\ref{eqn:isotropic}) for an arbitrary $w \in \mathbb S^{d-1}$, we get
$$\frac 1 2 = \int_{\mathbb{S}^1} |\langle z, w\rangle|^2\,  d\mu(z) = \alpha_1 \langle w, x\rangle^2 + \alpha_2 (1-\langle w, x \rangle^2) + \beta_1 \langle w, y\rangle^2 + \beta_2 (1-\langle w, y\rangle^2).$$
Hence there exists a linear dependence between 1, $\langle w, x\rangle^2$, and $\langle w, y\rangle^2$. This may happen only when $y$ is orthogonal or opposite to $x$ or the dependence is trivial. In the latter case, $\alpha_1=\alpha_2$ and $\beta_1=\beta_2$ so $\mu$ is precisely a convex combination of uniform distributions over two orthonormal bases.

The remaining case is when the support of $\mu$ consists of only $x$ and $x^{\perp}$ and, by the same condition, their weights must be equal.
\end{proof}

There are certain distinctions between the behavior of the $p$-frame potentials in the two- and three-input cases: in particular, while in the two-input case the uniform distribution over an orthonormal basis is the unique (up to symmetries) minimizer of the $p$-frame energy in the range $0<p<2$, the discussion in this section suggests that for the three-input energy this is only true for $0<p<1$, as Theorem \ref{thm:2-dim} indicates that one does not have uniqueness for $p =1$, and an argument similar to Theorem \ref{thm:p<1} would then show that the orthonormal basis is not a minimizer for $p> 1$. This difference might be partially explained by the fact that the degree of each of the variables in the $p$-frame potential in the three-input case is twice as large.

In the two-input case, the general conjecture in \cite{BGMPV2} claims that all minimizers of the  $p$-frame energy are discrete when $p\notin 2\mathbb{N}$.  It is therefore natural  to conjecture that all minimizers of the three-point $p$-frame energy are also discrete, at least  when $p\notin\mathbb{N}$.

\section{Isotropic measures without obtuse angles}\label{sec:non-obtuse}

Isotropic measures with no obtuse angles between any two points in their support clearly minimize the three-point $1$-frame energy (due to Corollary \ref{cor:$1$-frame}). In this section we show that only orthonormal bases satisfy this condition.

\begin{theorem}\label{thm:non-obtuse}
If $\langle x, y\rangle \geq 0$ for any $x$, $y$ in the support of an isotropic measure $\mu\in\mathcal{P}(\mathbb{S}^{d-1})$, then $\mu$ is a uniform distribution over an orthonormal basis.
\end{theorem}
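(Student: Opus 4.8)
The plan is to locate the support of $\mu$ inside an extremal spherical cap via Jung's inequality, to show that isotropy forces the support onto the rim of that cap, and then to reduce to a balanced isotropic problem in one lower dimension that a single algebraic identity resolves. First I would invoke the spherical version of Jung's inequality \cite{D}: since $\langle x,y\rangle\geq 0$ for all $x,y\in\supp(\mu)$, the support has geodesic diameter at most $\pi/2$, so it lies in a cap with some center $c\in\mathbb{S}^{d-1}$ and angular radius $\rho\leq\arccos(1/\sqrt d)$. Thus $\langle x,c\rangle\geq\cos\rho\geq 1/\sqrt d$ for every $x\in\supp(\mu)$. The key observation is that isotropy forces equality: by \eqref{eqn:isotropic}, $\int_{\mathbb{S}^{d-1}}\langle x,c\rangle^2\,d\mu(x)=1/d$, while the bound just obtained gives $\langle x,c\rangle^2\geq 1/d$ pointwise on the support. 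Hence $\langle x,c\rangle=1/\sqrt d$ for all $x\in\supp(\mu)$, placing the entire support on the rim of the extremal cap.

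Next I would project onto $c^{\perp}$. For $x\in\supp(\mu)$ write $x=\tfrac 1{\sqrt d}c+\tilde x$ with $\tilde x\in c^{\perp}$; the rim condition gives $|\tilde x|^2=(d-1)/d$, so $\hat x:=\sqrt{d/(d-1)}\,\tilde x$ is a unit vector in $c^{\perp}\cong\mathbb{R}^{d-1}$. Multiplying $\int xx^T\,d\mu=\tfrac 1 d I_d$ on the right by $c$ and using the rim condition yields $\int x\,d\mu=\tfrac 1{\sqrt d}c$, so $\int\hat x\,d\mu=0$; restricting the same matrix identity to $c^{\perp}$ gives $\int\hat x\hat x^T\,d\mu=\tfrac 1{d-1}I_{d-1}$ as an operator on $c^{\perp}$. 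Thus the pushforward of $\mu$ under $x\mapsto\hat x$ is a \emph{balanced} and \emph{isotropic} measure on $\mathbb{S}^{d-2}$. Finally, since $\langle x,y\rangle=\tfrac 1 d+\tfrac{d-1}{d}\langle\hat x,\hat y\rangle$, the hypothesis $\langle x,y\rangle\geq 0$ becomes the regular-simplex bound $\langle\hat x,\hat y\rangle\geq-\tfrac 1{d-1}$.

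The endgame is the identity
\begin{equation*}
\int\int\big(\langle\hat x,\hat y\rangle-1\big)\big(\langle\hat x,\hat y\rangle+\tfrac 1{d-1}\big)\,d\mu\,d\mu=0,
\end{equation*}
which follows by expanding the product and using $\int\int\langle\hat x,\hat y\rangle\,d\mu\,d\mu=\big\|\int\hat x\,d\mu\big\|^2=0$ together with $\int\int\langle\hat x,\hat y\rangle^2\,d\mu\,d\mu=\Tr\big[(\int\hat x\hat x^T\,d\mu)^2\big]=\tfrac 1{d-1}$. On $\supp(\mu)\times\supp(\mu)$ the first factor is $\leq 0$ and, by the simplex bound, the second is $\geq 0$, so the continuous integrand is nonpositive and the vanishing of its integral forces it to vanish throughout $\supp(\mu)\times\supp(\mu)$. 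Hence any two distinct points of $\supp(\mu)$ satisfy $\langle\hat x,\hat y\rangle=-\tfrac 1{d-1}$, equivalently $\langle x,y\rangle=0$. Therefore $\supp(\mu)$ is an orthonormal system $\{x_1,\dots,x_k\}$ with $k\leq d$, and comparing diagonal entries in $\sum_i\mu(\{x_i\})\,x_ix_i^T=\tfrac 1 d I_d$ forces $k=d$ and $\mu(\{x_i\})=1/d$, so $\mu$ is the uniform distribution over an orthonormal basis.

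I expect the main obstacle to be the reduction step rather than the final identity: one must confirm that isotropy genuinely promotes Jung's inequality to the exact rim condition $\langle x,c\rangle=1/\sqrt d$, and that the pushforward to $c^{\perp}$ is both balanced and isotropic (the computation $\int x\,d\mu=\tfrac 1{\sqrt d}c$ being the crucial input). A secondary point requiring care is the passage from ``almost everywhere'' to ``everywhere'' on the support, which relies on continuity of the integrand and on $\supp(\mu\times\mu)=\supp(\mu)\times\supp(\mu)$.
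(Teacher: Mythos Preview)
Your proof is correct and follows essentially the same route as the paper: apply Dekster's spherical Jung theorem to place $\supp(\mu)$ in a cap, use isotropy to force the support onto the rim $\langle x,c\rangle=1/\sqrt d$, push forward to a balanced isotropic measure on $\mathbb S^{d-2}$ with the simplex bound $\langle\hat x,\hat y\rangle\ge -\tfrac1{d-1}$, and finish with the same polynomial $(t-1)\big(t+\tfrac1{d-1}\big)$. The paper packages the last two steps as separate statements (Lemma~\ref{lem:lift} and Theorem~\ref{thm:iso_diam}) while you inline them, but the argument is the same.
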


For the proof of this theorem, we need three ingredients: the spherical Jung inequality, the procedure of lifting of balanced isotropic measures to a higher dimension, and the linear programming bound on the diameter of a balanced isotropic measure.

Jung's inequality \cite{Jung} shows that the simplex has the maximal circumradius for  a Euclidean set of a given diameter. For the first ingredient of the proof, we use the spherical version of this inequality proved by Dekster \cite{D}.

\begin{theorem}[Dekster]\label{thm:dekster}
If the spherical distance, i.e. $\arccos(\langle x, y \rangle)$, between any two points of a compact set $C \subset\mathbb{S}^{d-1}$ is not greater than $D$, $0\leq D\leq 2\sin^{-1}\sqrt{\frac d {2d-2}}$, then $C$ can be covered by a spherical cap whose radius is a circumradius of a regular simplex in $\mathbb{S}^{d-1}$ with spherical edge length $D$.
\end{theorem}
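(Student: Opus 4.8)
The plan is to reduce the statement to the classical Euclidean Jung inequality \cite{Jung} by passing to the smallest spherical cap covering $C$. Covering $C$ by a cap of angular radius $\rho$ centered at $p\in\mathbb{S}^{d-1}$ amounts to requiring $\langle x,p\rangle\ge\cos\rho$ for every $x\in C$, so the smallest enclosing cap is obtained by maximizing the concave function $f(p)=\min_{x\in C}\langle x,p\rangle$ over $p\in\mathbb{S}^{d-1}$. Let $p$ be a maximizer (which exists by compactness of $\mathbb{S}^{d-1}$ and continuity of $f$), and let $\rho$ be the corresponding radius, so $\cos\rho=f(p)$. Within the admissible range of $D$ one verifies that the optimal $\rho$ stays below $\pi/2$, so we may assume $\cos\rho>0$.

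First I would extract the optimality structure at $p$. Let $A=\{x\in C:\langle x,p\rangle=\cos\rho\}$ be the set of active points lying on the boundary of the cap; since $A\subseteq C$, its spherical diameter is at most $D$. Because $f$ is a minimum of linear functionals, its superdifferential at $p$ is $\operatorname{conv}(A)$, and the first-order condition for $p$ to maximize $f$ on the sphere produces a convex combination $q=\sum_i\lambda_i x_i\in\operatorname{conv}(A)$ with $q=\mu p$ for some $\mu\in\mathbb{R}$. Pairing with $p$ and using $\langle x_i,p\rangle=\cos\rho$ for $x_i\in A$ gives $\mu=\langle q,p\rangle=\sum_i\lambda_i\langle x_i,p\rangle=\cos\rho>0$, hence $q=\cos\rho\,p$ and $\langle x_i,q\rangle=\cos^2\rho$. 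Consequently $\|x_i-q\|^2=1-2\cos^2\rho+\cos^2\rho=\sin^2\rho$, so all points of $A$ lie on a Euclidean sphere of radius $\sin\rho$ centered at $q$, while $q\in\operatorname{conv}(A)$ is its center and lies in their convex hull. Moreover $A$ is contained in the affine hyperplane $H=\{x:\langle x,p\rangle=\cos\rho\}$ of dimension $d-1$.

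Next I would invoke Jung's inequality inside $H\cong\mathbb{R}^{d-1}$. Since $q$ lies in $\operatorname{conv}(A)$ and every point of $A$ is at distance $\sin\rho$ from $q$, a standard argument shows that $\sin\rho$ is exactly the Euclidean circumradius of $A$ (no ball of smaller radius can contain a set whose circumcenter lies in its convex hull). The Euclidean diameter of $A$ is at most $\sqrt{2-2\cos D}$, because $\|x-y\|^2=2-2\langle x,y\rangle\le 2-2\cos D$ whenever the spherical distance is at most $D$. Jung's inequality in dimension $d-1$ then yields $\sin^2\rho\le\frac{d-1}{2d}(2-2\cos D)=\frac{(d-1)(1-\cos D)}{d}$. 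A direct computation with the Gram matrix of $d$ unit vectors having pairwise inner product $\cos D$ shows that the spherical circumradius $\rho_0$ of the regular $(d-1)$-simplex with edge length $D$ satisfies $\sin^2\rho_0=\frac{(d-1)(1-\cos D)}{d}$, whence $\rho\le\rho_0$, which is the asserted bound. The hypothesis $D\le 2\sin^{-1}\sqrt{\tfrac{d}{2d-2}}$ is exactly the condition $\cos D\ge-\tfrac1{d-1}$ under which that Gram matrix is positive semidefinite, i.e. under which the regular simplex exists on $\mathbb{S}^{d-1}$ and $\rho_0\le\pi/2$.

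The main obstacle is the rigorous justification of the optimality structure in the second step: one must show that a maximizer $p$ of the nonsmooth concave function $f$ admits active points whose convex hull contains a nonnegative multiple of $p$. This is a Carath\'eodory/KKT statement for maximizing a concave function on the nonconvex constraint $\mathbb{S}^{d-1}$; Carath\'eodory's theorem also guarantees that one may take $|A|\le d$, though for the final estimate only the dimension of $H$ enters, through Jung's inequality. Some care is also needed to confirm that the optimal $\rho$ does not exceed $\pi/2$ across the whole admissible range of $D$, so that the reduction to a genuine cap with $\cos\rho>0$ is legitimate.
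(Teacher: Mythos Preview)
The paper does not contain a proof of this theorem: it is quoted from Dekster~\cite{D} and invoked as a black-box ingredient in the proof of Theorem~\ref{thm:non-obtuse}. There is therefore nothing in the paper to compare your proposal against.

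Your reduction to the Euclidean Jung inequality is the natural route and the computations are correct: the active set $A$ lies on a $(d-2)$-sphere of Euclidean radius $\sin\rho$ in the affine hyperplane $H$, the optimality condition places $q=(\cos\rho)\,p\in\operatorname{conv}(A)$ (the nonsmooth Lagrange step you call ``the main obstacle'' is standard and can be made rigorous via Danskin's theorem or a direct Carath\'eodory argument), and Jung's inequality in $\mathbb{R}^{d-1}$ then yields $\sin^2\rho\le\frac{(d-1)(1-\cos D)}{d}=\sin^2\rho_0$.

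The genuine gap is the other point you flag: from $\sin^2\rho\le\sin^2\rho_0$ you obtain $\rho\le\rho_0$ \emph{only} if $\rho\le\pi/2$ is known beforehand, and this cannot be deduced from the stated hypotheses over the full range of $D$. In fact the statement as printed in the paper is false near the top of that range. Already on $\mathbb{S}^1$ ($d=2$) three equally spaced points have diameter $2\pi/3<\pi=2\arcsin 1$ but smallest enclosing arc of radius $2\pi/3>\pi/3=\rho_0$; on $\mathbb{S}^2$ ($d=3$) the regular tetrahedron has diameter $\arccos(-1/3)<2\pi/3=2\arcsin\sqrt{3/4}$, yet its smallest enclosing cap has radius $\arccos(-1/3)=\pi-\rho_0>\rho_0$. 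So the admissible range for $D$ in the paper's transcription of Dekster's result appears too generous, and your missing step is not a mere technicality but exactly where the statement breaks. Your argument \emph{does} go through whenever one can guarantee $\rho\le\pi/2$ a priori; in particular it is valid for all $D\le\pi/2$ (any $x\in C$ is then the center of a closed hemisphere containing $C$, so $\rho\le\pi/2$), and this is the only case the paper actually uses, namely $D=\pi/2$.
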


Note that a regular simplex with edge length $\frac {\pi} 2$ is formed by the endpoints of an orthonormal basis $e_1,\ldots, e_d$ in $\mathbb{S}^{d-1}$. Denote $z=\frac 1 {\sqrt{d}} (e_1+\ldots +e_d)$. The points on the circumsphere of the simplex are defined by $\langle x,z \rangle =\frac 1 {\sqrt{d}}$. Theorem \ref{thm:dekster} then implies that for any spherical set, where $\langle x, y\rangle \geq 0$ for any pair of points $x$ and $y$, there exists $z\in\mathbb{S}^{d-1}$ such that $\langle x, z\rangle \geq \frac 1 {\sqrt{d}}$ for any $x$ in the set.

For the next ingredient of the proof of Theorem \ref{thm:non-obtuse}, we describe the procedure of lifting of an isotropic measure to a higher dimension. Assume $z\in\mathbb{S}^d$ and take for $\mathbb{S}^{d-1}$ the intersection of $\mathbb{S}^d$ and the hyperplane through the origin orthogonal to $z$. Define $f:\mathbb{S}^{d-1}\rightarrow\mathbb{S}^d$ as follows:
$$f(x)=\sqrt{\frac d {d+1}} x + \frac 1 {\sqrt{d+1}} z.$$
For $\mu\in\mathcal{P}(\mathbb{S}^{d-1})$ define the lifted measure $\mu_l\in\mathcal{P}(\mathbb{S}^d)$ as the pushforward of $\mu$ under $f$.

\begin{lemma}\label{lem:lift}
A probability measure $\mu$ is a balanced isotropic measure in $\mathbb{S}^{d-1}$ if and only if $\mu_l$ is an isotropic measure in $\mathbb{S}^{d}$.
\end{lemma}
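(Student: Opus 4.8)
The plan is to reduce the statement to a direct comparison of second-moment matrices, exploiting the orthogonal splitting $\mathbb{R}^{d+1} = z^\perp \oplus \mathbb{R}z$. Since $\mu_l$ is the pushforward of $\mu$ under $f$, the change-of-variables formula gives
$$\int_{\mathbb{S}^d} w w^T \, d\mu_l(w) = \int_{\mathbb{S}^{d-1}} f(x) f(x)^T \, d\mu(x),$$
and by definition $\mu_l$ is isotropic on $\mathbb{S}^d$ exactly when this matrix equals $\frac{1}{d+1} I_{d+1}$. So the whole lemma follows once I expand and integrate the right-hand side.

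First I would expand $f(x) f(x)^T$ using $f(x) = \sqrt{d/(d+1)}\, x + (d+1)^{-1/2} z$, obtaining
$$f(x) f(x)^T = \frac{d}{d+1} x x^T + \frac{\sqrt{d}}{d+1}\big(x z^T + z x^T\big) + \frac{1}{d+1} z z^T.$$
Integrating against $\mu$ and writing $m = \int_{\mathbb{S}^{d-1}} x \, d\mu(x)$ for the center of mass and $A = \int_{\mathbb{S}^{d-1}} x x^T \, d\mu(x)$ for the second-moment matrix of $\mu$, this becomes
$$\int_{\mathbb{S}^d} w w^T \, d\mu_l(w) = \frac{d}{d+1} A + \frac{\sqrt{d}}{d+1}\big(m z^T + z m^T\big) + \frac{1}{d+1} z z^T.$$

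The key step is to compare this expression with $\frac{1}{d+1} I_{d+1}$ block by block relative to the splitting $\mathbb{R}^{d+1} = z^\perp \oplus \mathbb{R}z$, noting that $A$ and $m$ live entirely in $z^\perp$ (since $\supp \mu \subset \mathbb{S}^{d-1} \subset z^\perp$) and that $I_{d+1} = P_{z^\perp} + z z^T$, where $P_{z^\perp}$ is the identity on $z^\perp$. The $z z^T$ block matches automatically (both sides give $\frac{1}{d+1} z z^T$). The off-diagonal $z^\perp$--$\mathbb{R}z$ block of the left side is $\frac{\sqrt{d}}{d+1}(m z^T + z m^T)$, while the right side has none, so equality there holds iff $m = 0$, i.e. iff $\mu$ is balanced. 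Finally, the $z^\perp$ block reads $\frac{d}{d+1} A = \frac{1}{d+1} P_{z^\perp}$, i.e. $A = \frac{1}{d} I_d$ on $z^\perp$, which is precisely isotropy of $\mu$ on $\mathbb{S}^{d-1}$. Thus $\mu_l$ is isotropic iff $\mu$ is both balanced and isotropic, proving the equivalence.

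I do not expect a genuine obstacle here: the only thing to watch is the bookkeeping of the orthogonal decomposition — in particular, correctly recognizing $I_{d+1}$ as the sum of the identity on the hyperplane $z^\perp$ (against which $A$ is compared) and the rank-one piece $z z^T$ — and keeping track of the normalizing constants $d/(d+1)$ and $1/(d+1)$ so that the isotropy constant on $\mathbb{S}^{d-1}$ comes out as $1/d$ rather than $1/(d+1)$.
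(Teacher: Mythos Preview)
Your proof is correct and follows essentially the same route as the paper: both exploit the orthogonal splitting $\mathbb{R}^{d+1}=z^\perp\oplus\mathbb{R}z$ and reduce isotropy of $\mu_l$ to the conditions $m=0$ and $A=\tfrac{1}{d}I_d$. The only difference is packaging---you work with the second-moment matrix $\int f(x)f(x)^T\,d\mu$ and compare blocks against $\tfrac{1}{d+1}I_{d+1}$, whereas the paper carries out the equivalent scalar computation $\int\langle x,y\rangle^2\,d\mu_l(x)$ for a generic $y=\alpha_y y'+\beta_y z$---which makes your argument slightly more compact since both directions are handled at once.
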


\begin{proof}
Any $y\in\mathbb{S}^d$ can be represented as $\alpha_y y' + \beta_y z$, where $y'\in\mathbb{S}^{d-1}$ and $\alpha_y^2+\beta_y^2=1$. For any $x' \in\supp(\mu)$, letting $x = f(x')$, $\alpha_x=\sqrt{\frac d {d+1}}$ and $\beta_x=\frac 1 {\sqrt{d+1}}$. Then
\begin{align*}
\int_{\mathbb{S}^d} \langle x, y \rangle^2\, d\mu_l(x) & = \int_{\mathbb{S}^d} \left(\sqrt{\frac d {d+1}}\alpha_y \langle x', y' \rangle + \frac 1 {\sqrt{d+1}}\beta_y\right)^2\, d\mu_l(x) \\
&= \frac 1 {d+1} \int_{\mathbb{S}^d} \left(d \alpha_y^2 \langle x', y' \rangle^2 + 2\sqrt{d}\alpha_y\beta_y \langle x', y' \rangle + \beta_y^2\right)\, d\mu_l(x)\\
& = \frac 1 {d+1} \int_{\mathbb{S}^{d-1}} \left(d \alpha_y^2 \langle x', y' \rangle^2 + 2\sqrt{d}\alpha_y\beta_y \langle x', y' \rangle + \beta_y^2\right)\, d\mu(x')\\
& =\frac {d \alpha_y^2} {d+1} \int_{\mathbb{S}^{d-1}} \langle x', y' \rangle^2\, d\mu(x') + \frac {2\sqrt{d}\alpha_y\beta_y} {d+1} \int_{\mathbb{S}^{d-1}} \langle x', y' \rangle\, d\mu(x')+ \frac {\beta_y^2} {d+1}.
\end{align*}

If $\mu$ is balanced and isotropic, then, due to (\ref{eqn:isotropic}), for any $y'\in\mathbb{S}^{d-1}$, 
\begin{align*}
\frac {d \alpha_y^2} {d+1} \int_{\mathbb{S}^{d-1}} \langle x', y' \rangle^2\, d\mu(x') + \frac {2\sqrt{d}\alpha_y\beta_y} {d+1} \int_{\mathbb{S}^{d-1}} \langle x', y' \rangle\, d\mu(x')+ \frac {\beta_y^2} {d+1} &=\frac {d \alpha_y^2} {d+1}\cdot \frac 1 d + 0 + \frac {\beta_y^2} {d+1}\\
& = \frac {\alpha_y^2+\beta_y^2} {d+1} = \frac 1 {d+1},
\end{align*}
so (\ref{eqn:isotropic}) is satisfied and $\mu_l$ is isotropic in $\mathbb{S}^{d}$. 

For the other direction, assume $\mu_l$ is isotropic. Taking $y\in\mathbb{S}^{d-1}$, i.e. $\alpha_y=1$ and $\beta_y=0$, we get
$$\frac 1 {d+1} = \frac d {d+1} \int_{\mathbb{S}^{d-1}} \langle x', y' \rangle^2\, d\mu(x'),$$
so, again by (\ref{eqn:isotropic}), $\mu$ is istropic in $\mathbb{S}^{d-1}$. Now we use that both $\mu_l$ and $\mu$ are isotropic and take an arbitrary $\alpha_y\neq 0, 1$ to get
$$\frac 1 {d+1} = \frac {d \alpha_y^2} {d+1}\cdot \frac 1 d + \frac {2\sqrt{d}\alpha_y\beta_y} {d+1} \int_{\mathbb{S}^{d-1}} \langle x', y' \rangle\, d\mu(x')+ \frac {\beta_y^2} {d+1}$$
so
$$\frac {2\sqrt{d}\alpha_y\beta_y} {d+1} \int_{\mathbb{S}^{d-1}} \langle x', y' \rangle\, d\mu(x')=0,$$
meaning that $\mu$ is balanced.
\end{proof}

\begin{remark}
Essentially the same lifting construction for discrete sets was used, for example, in \cite{BGOY} to construct two-distance tight frames and in \cite{Ball1} to make use of the geometric Brascamp-Lieb inequality.
\end{remark}

The next result provides the bound on the diameter of the support of a balanced isotropic measure confined to a sphere. The spirit of the proof resembles the linear programming approach, where a carefully constructed polynomial typically leads to a required bound. Similar optimization results for isotropic measures were also obtained in \cite{Gl}.

\begin{theorem}\label{thm:iso_diam}
Let $\mu\in\mathcal{P}(\mathbb{S}^{d-1})$ be a balanced isotropic measure. If $\langle x,y \rangle \geq -\frac 1 d$ for any $x,y\in\supp(\mu)$, then $\mu$ is a uniform distribution over a regular simplex.
\end{theorem}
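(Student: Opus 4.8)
The plan is to run a one-polynomial linear programming argument with the auxiliary function $G(t) = (t+\tfrac1d)(1-t)$. This choice is dictated by two requirements: $G$ should be nonnegative on the feasible range of inner products, and its double integral against $\mu$ should vanish. The first holds because the roots of $G$ are exactly $-\tfrac1d$ and $1$, so $G(t)\ge 0$ for $t\in[-\tfrac1d,1]$, which contains $\langle x,y\rangle$ for all $x,y\in\supp(\mu)$. For the second, I would expand $G(t) = -t^2 + (1-\tfrac1d)t + \tfrac1d$ and integrate term by term: the quadratic term contributes $\int\int \langle x,y\rangle^2\, d\mu(x)d\mu(y) = \tfrac1d$ by isotropy \eqref{eqn:isotropic}, the linear term contributes $\|\int x\, d\mu\|^2 = 0$ since $\mu$ is balanced, and the constant term contributes $\tfrac1d$. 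These cancel, giving $\int\int G(\langle x,y\rangle)\, d\mu(x)d\mu(y) = 0$. Notice that once one insists on a quadratic vanishing at the two admissible extreme inner products $-\tfrac1d$ and $1$, this cancellation is automatic, so $G$ is essentially forced.

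Since $G(\langle x,y\rangle)$ is continuous and nonnegative on $\supp(\mu)\times\supp(\mu) = \supp(\mu\times\mu)$ and integrates to zero, it must vanish identically on the support; hence every pair of distinct points $x,y\in\supp(\mu)$ satisfies $\langle x,y\rangle = -\tfrac1d$. I would then invoke the standard Gram matrix computation: $k$ unit vectors with all pairwise inner products equal to $-\tfrac1d$ have Gram matrix $(1+\tfrac1d)I_k - \tfrac1d J_k$, whose eigenvalues are $1+\tfrac1d$ and $\tfrac{d+1-k}{d}$, the latter being nonnegative only when $k\le d+1$. Thus no more than $d+1$ such vectors exist, so $\supp(\mu)$ is finite with at most $d+1$ points and $\mu = \sum_{i=1}^{k} w_i\,\delta_{x_i}$ with $w_i>0$ and $\sum_i w_i = 1$.

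To pin down $k$ and the weights I would use the two structural constraints. Isotropy forces $\sum_i w_i x_i x_i^T = \tfrac1d I_d$ to have full rank, so the $x_i$ span $\R^d$ and $k\ge d$. If $k=d$, the Gram matrix is positive definite, so the $x_i$ are linearly independent, and then the balance relation $\sum_i w_i x_i = 0$ would force every $w_i=0$, a contradiction. Hence $k=d+1$ and $\{x_i\}$ are the vertices of a regular simplex. Finally, a direct computation gives $\|\sum_{i=1}^{d+1} x_i\|^2 = (d+1) + (d+1)d\cdot(-\tfrac1d) = 0$, so $\sum_i x_i = 0$; since $d+1$ rank-$d$ vectors admit only a one-dimensional space of linear relations, this relation must be the unique one up to scale. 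Therefore the balance relation $\sum_i w_i x_i = 0$ forces $(w_1,\dots,w_{d+1})$ to be proportional to $(1,\dots,1)$, i.e. $w_i = \tfrac{1}{d+1}$, yielding the uniform distribution over a regular simplex.

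The only genuinely creative step is the selection of $G$; after that, everything reduces to routine linear algebra. The one point I would be careful about is the passage from the almost-everywhere vanishing of $G(\langle x,y\rangle)$ to its vanishing on all of $\supp(\mu)\times\supp(\mu)$, which I would justify by continuity together with the definition of support, thereby handling atomic and non-atomic measures uniformly rather than splitting into cases.
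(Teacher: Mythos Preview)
Your proof is correct and follows essentially the same approach as the paper: the paper uses $P(t)=(t-1)(t+\tfrac1d)=-G(t)$, shows its double integral vanishes by isotropy and balance, and concludes that all distinct pairs in $\supp(\mu)$ have inner product $-\tfrac1d$, then invokes balance to force equal weights. Your version simply spells out the Gram-matrix linear algebra more explicitly where the paper says ``$\supp(\mu)$ is a subset of the $d+1$ vertices of a regular simplex'' and ``only those with equal coefficients are $0$.''
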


\begin{proof}
Define $P(t)=(t-1)(t+\frac 1 d)$ and note that $P(\langle x, y \rangle) \leq 0$ for any $x,y\in\supp(\mu)$. Then
\begin{equation}\label{eqn:iso_diam}
\int_{\mathbb{S}^{d-1}} \int_{\mathbb{S}^{d-1}} P(\langle x, y \rangle)\, d\mu(x)d\mu(y) \leq 0
\end{equation}
and it is strictly less than 0 if $P(\langle x, y \rangle)<0$ for some $x,y\in\supp(\mu)$.

On the other hand, $P(\langle x, y \rangle) = \langle x, y \rangle^2 - \frac {d-1} d \langle x, y \rangle - \frac 1 d$. Since $\mu$ is balanced and isotropic,
$$\int_{\mathbb{S}^{d-1}} \int_{\mathbb{S}^{d-1}} \langle x, y \rangle\, d\mu(x)d\mu(y) = 0 \, \text{ and }\, \int_{\mathbb{S}^{d-1}} \int_{\mathbb{S}^{d-1}} \langle x, y \rangle^2\, d\mu(x)d\mu(y) = \frac 1 d.$$
This means (\ref{eqn:iso_diam}) is sharp for $\mu$ and $P(\langle x, y \rangle)=0$ for any $x,y\in\supp(\mu)$. In particular, $\langle x, y \rangle$ must be $-\frac 1 d$ for any distinct $x$ and $y$ in the support of $\mu$. Therefore, $\supp(\mu)$ is a subset of the $d+1$ vertices of a regular simplex in $\mathbb{S}^{d-1}$. Among linear combinations of these vertices, only those with equal coefficients are 0. Given that $\mu$ is balanced, it must be a uniform distribution over the set of vertices of a regular simplex.
\end{proof}

\begin{remark}\label{rem:iso_diam}
Theorem \ref{thm:iso_diam} implies that the spherical diameter of the support of a balanced isotropic measure on the unit sphere is at least $\arccos \left( -\frac 1 d\right)$. Moreover, if the diameter is precisely $\arccos \left(-\frac 1 d\right)$, the measure is necessarily a uniform distribution over a regular simplex.
\end{remark}

We now have all the ingredients necessary for the proof of Theorem \ref{thm:non-obtuse}.

\begin{proof}[Proof of Theorem \ref{thm:non-obtuse}]
For the first step, we use Theorem \ref{thm:dekster} and, as described above, find $z\in\mathbb{S}^{d-1}$ such that $\langle x, z \rangle\geq\frac 1 {\sqrt{d}}$ for all $x\in\supp(\mu)$. By Equality (\ref{eqn:isotropic}),
$$ \int_{\mathbb{S}^{d-1}} \langle x, z \rangle^2\, d\mu(x) = \frac 1 d,$$
so $\langle x, z \rangle = \frac 1 {\sqrt{d}}$ for all $x\in\supp(\mu)$.

This means $\mu$ is a lifted measure for $\mu'\in\mathcal{P}(\mathbb{S}^{d-2})$. By Lemma \ref{lem:lift}, $\mu'$ is balanced and isotropic. If $x, y\in\supp(\mu)$, then $x=\sqrt{\frac {d-1} d} x' + \frac 1 {\sqrt{d}} z$ and $y=\sqrt{\frac {d-1} d} y' + \frac 1 {\sqrt{d}} z$, where $x', y'\in\supp(\mu')$. The condition $\langle x,y \rangle \geq 0$ is equivalent to $\langle x',y' \rangle \geq -\frac 1 {d-1}$. Then $\mu'$ satisfies Theorem \ref{thm:iso_diam} and must be a uniform distribution over a regular simplex in $\mathbb{S}^{d-2}$. Lifting the simplex to $\mathbb{S}^{d-1}$ we get that $\mu$ must be a uniform distribution over an orthonormal basis.
\end{proof}

\begin{remark}\label{rem:non-obtuse}
Similarly to Remark \ref{rem:iso_diam}, Theorem \ref{thm:non-obtuse} implies that the spherical diameter of the support of an isotropic measure on the unit sphere is at least $\frac \pi 2$ and, if the diameter is precisely $\frac \pi 2$, the measure is necessarily a uniform distribution over an orthonormal basis.
\end{remark}

\section{\texorpdfstring{Support of a minimizer for $1$-frame energy has dimension not greater than $d-2$}{Support of a minimizer for 1-frame energy has dimension not greater than d-2}}\label{sec:supp}

One of the main results of \cite{BGMPV2} states that the support of a minimizer of a two-point $p$-frame energy has empty interior, when $p$ is not an even integer. Here we prove a similar result for the three-point $1$-frame energy.

\begin{lemma}\label{lem:orth_point}
If $\mu$ is a minimizer of the three-point $1$-frame energy among all measures in $\mathcal{P}(\mathbb{S}^{d-1})$, then for any $x\in\supp(\mu)$ there is $y\in\supp(\mu)$ such that $\langle x,y \rangle = 0$.
\end{lemma}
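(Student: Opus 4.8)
The plan is to argue by contradiction, reflecting the support of $\mu$ onto one side of the hyperplane $x^{\perp}$ and then invoking the rigidity of Theorem~\ref{thm:non-obtuse}. Suppose $\mu$ is a minimizer and that some $x \in \supp(\mu)$ has no orthogonal partner, so that $\langle x, y \rangle \neq 0$ for every $y \in \supp(\mu)$. By Corollary~\ref{cor:$1$-frame}, $\mu$ is isotropic and satisfies $\langle a,b \rangle \langle b,c \rangle \langle c,a \rangle \geq 0$ for all $a,b,c \in \supp(\mu)$. These two properties, together with the reflection trick, are all I intend to use.

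First I would reflect. Define $g \colon \mathbb{S}^{d-1} \to \mathbb{S}^{d-1}$ by $g(y) = y$ when $\langle x, y \rangle \geq 0$ and $g(y) = -y$ otherwise, and set $\tilde\mu = g_{*}\mu$. Since $|\langle x,y \rangle \langle x,z \rangle \langle y,z \rangle|$ is unchanged when any argument is replaced by its antipode, the energy is preserved, $I_{|uvt|}(\tilde\mu) = I_{|uvt|}(\mu)$, so $\tilde\mu$ is again a minimizer; by Corollary~\ref{cor:$1$-frame} it is isotropic and its support again obeys the triple-product positivity condition. Because $\supp(\mu)$ avoids the hyperplane $x^{\perp}$ by assumption, $g$ is continuous on $\supp(\mu)$, whence $\supp(\tilde\mu) = g(\supp(\mu))$ is compact, $x$ is fixed by $g$ so that $x \in \supp(\tilde\mu)$, and $\langle x, w \rangle > 0$ for every $w \in \supp(\tilde\mu)$.

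The second step upgrades this one-point positivity to pairwise non-obtuseness. For any $w, w' \in \supp(\tilde\mu)$, the triple $(x, w, w')$ satisfies $\langle x, w \rangle \langle x, w' \rangle \langle w, w' \rangle \geq 0$; since the first two factors are strictly positive, $\langle w, w' \rangle \geq 0$. Thus $\tilde\mu$ is an isotropic measure with no obtuse angles in its support, and Theorem~\ref{thm:non-obtuse} forces $\tilde\mu$ to be the uniform distribution over an orthonormal basis $e_1, \dots, e_d$. Finally I would undo the reflection: $x \in \supp(\tilde\mu)$ gives $x = e_j$ for some $j$, and for any $k \neq j$ the point $e_k \in \supp(\tilde\mu) = g(\supp(\mu))$ equals $g(q)$ for some $q \in \supp(\mu)$ with $q = \pm e_k \perp x$. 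This $q$ is an orthogonal partner of $x$ in $\supp(\mu)$, contradicting our assumption (such $k$ exists as $d \geq 2$).

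I expect the main obstacle to be purely bookkeeping: justifying $\supp(g_{*}\mu) = g(\supp(\mu))$ and the invariance of the energy under $g$. Both are straightforward once one observes that $\supp(\mu)$ misses $x^{\perp}$, so that $g$ is continuous there, and that $|uvt|$ is invariant under sign flips of its arguments. The genuine idea is the reflection itself: moving the support into the closed half-space where $x$ has non-negative inner product with every point converts the weak three-point positivity condition into the strong pairwise condition, which is precisely the hypothesis of Theorem~\ref{thm:non-obtuse}.
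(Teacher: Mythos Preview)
Your proof is correct and follows essentially the same approach as the paper: reflect the support across $x^{\perp}$ so that every point makes a non-negative (in fact positive) inner product with $x$, use the triple-product condition from Corollary~\ref{cor:$1$-frame} to upgrade this to pairwise non-obtuseness, and then invoke Theorem~\ref{thm:non-obtuse} to force an orthonormal basis, contradicting the assumption. The paper's version is terser---it simply says ``rearrange the measure \dots\ by switching points to their opposites if necessary'' and leaves the invariance and support bookkeeping implicit---while you spell out that $g$ is continuous on $\supp(\mu)$ (since it misses $x^{\perp}$) and that the energy and hence minimizer status are preserved; but the ideas are the same.
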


\begin{proof}
Assume there is $x$ in the support of $\mu$ such that none of the other points in $\supp(\mu)$ are orthogonal to $x$. Rearrange the measure so that all points in its support form an acute angle with $x$ by switching points to their opposites if necessary. Now if $y, z\in\supp(\mu)$, both $\langle x,y \rangle$ and $\langle x,z \rangle$ are positive. As was established in Corollary \ref{cor:$1$-frame}, $\langle x,y \rangle \langle x,z \rangle \langle y,z \rangle \geq 0$ for any triple of points in the support of $\mu$, hence, also  $\langle y,z \rangle \geq 0$.  The rearranged measure satisfies Theorem \ref{thm:non-obtuse} so it must be a uniform distribution over an orthonormal basis. Subsequently, the initial assumption is false.
\end{proof}

\begin{theorem}\label{thm:codim1}
If $\mu$ is a minimizer of the three-point $1$-frame energy among all measures in $\mathcal{P}(\mathbb{S}^{d-1})$, then the Hausdorff dimension of $\supp(\mu)$ is no greater than $d-2$.
\end{theorem}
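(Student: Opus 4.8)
The plan is to bound the dimension of $\supp(\mu)$ locally and then assemble the pieces by a countable covering. Fix a spherical cap $U$ small enough that any two points $a,b\in\supp(\mu)\cap U=:C$ satisfy $\langle a,b\rangle>0$ (angular radius below $\pi/4$ suffices). Since $\mathbb{S}^{d-1}$ is second countable, countably many such caps cover $\supp(\mu)$, and Hausdorff dimension is stable under countable unions; hence it is enough to show $\dim_H C\le d-2$ for each such piece. Throughout I will use only two facts already established for minimizers: the sign condition $\langle x,y\rangle\langle x,z\rangle\langle y,z\rangle\ge 0$ for all $x,y,z\in\supp(\mu)$ (Corollary \ref{cor:$1$-frame}) and the existence of an orthogonal counterpart for every support point (Lemma \ref{lem:orth_point}).

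The structural heart of the argument is to produce, for each $w\in C$, a supporting hyperplane of the convex cone generated by $C$ passing through both $w$ and the origin. Given $w\in C$, Lemma \ref{lem:orth_point} furnishes $n_w\in\supp(\mu)$ with $\langle w,n_w\rangle=0$. For any $a,b\in C$ the sign condition applied to the triple $(a,b,n_w)$ gives $\langle a,b\rangle\,\langle a,n_w\rangle\,\langle b,n_w\rangle\ge 0$, and since $\langle a,b\rangle>0$ on $C$ we conclude $\langle a,n_w\rangle\,\langle b,n_w\rangle\ge 0$ for all $a,b\in C$. Thus $a\mapsto\langle a,n_w\rangle$ has constant sign on $C$; replacing $n_w$ by $-n_w$ if necessary, we obtain a unit vector $\nu_w$ with $\langle a,\nu_w\rangle\ge 0$ for every $a\in C$ and $\langle w,\nu_w\rangle=0$. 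Consequently the closed convex cone $W:=\overline{\operatorname{cone}(C)}$ lies in the half-space $\{v:\langle v,\nu_w\rangle\ge 0\}$ whose bounding hyperplane passes through the origin and through $w$, so every $w\in C$ lies on $\partial W$.

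It then remains to convert $C\subseteq\partial W$ into the dimension bound. Here $W$ is a closed convex cone in $\mathbb{R}^d$; if it is not full-dimensional then $C$ lies in a proper great subsphere and $\dim_H C\le d-2$ is immediate, so assume $W$ is full-dimensional. The previous step shows $W\ne\mathbb{R}^d$, so $\partial W$ is the boundary of a proper convex set, hence locally a Lipschitz graph and $\mathcal H^{d-1}$-rectifiable with $\dim_H\partial W\le d-1$. Since $\partial W$ is a cone, away from the origin it factors as a bi-Lipschitz product of a radial interval with $B:=\partial W\cap\mathbb{S}^{d-1}$, so $\dim_H\partial W=1+\dim_H B$ and therefore $\dim_H B\le d-2$. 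As $C\subseteq B$, we get $\dim_H C\le d-2$, and the countable covering finishes the proof.

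The main obstacle is the structural step of the second paragraph: recognizing that the orthogonal counterparts supplied by Lemma \ref{lem:orth_point}, combined with the strict positivity of inner products on a small cap, force every point of the local support onto the boundary of a convex cone through the origin. Once this supporting-hyperplane picture is in place, the passage to the Hausdorff dimension bound is soft convex geometry, though one should still justify cleanly that intersecting the $(d-1)$-dimensional boundary of a convex cone with $\mathbb{S}^{d-1}$ drops the dimension to $d-2$ --- it is the cone (radial) structure, rather than a generic transversality argument, that makes this rigorous.
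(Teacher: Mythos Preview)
Your proof is correct and follows essentially the same route as the paper's: cover by small caps, use Lemma \ref{lem:orth_point} together with the sign condition from Corollary \ref{cor:$1$-frame} to produce a supporting hyperplane through each local support point, and conclude that the local support lies on the boundary of a convex object. The only cosmetic difference is that the paper works with the spherically convex set $C\cap H$ (where $H$ is the intersection of the supporting half-spaces) and invokes Corollary \ref{cor:Convex Boundary Finite} to bound the $(d-2)$-dimensional Hausdorff content of its boundary directly, whereas you pass to the Euclidean cone $W=\overline{\operatorname{cone}(C)}$ and peel off the radial direction to reach the same dimension bound.
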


\begin{proof}

Fix $0<\theta<\pi/4$. We consider an arbitrary spherical cap $C$ of radius $\theta$. It is sufficient to show that the $(d-2)$-dimensional Hausdorff content of $C\cap\supp(\mu)$ is finite. Indeed, since the sphere can be covered by a finite number of caps with radius $\theta$, the total $(d-2)$-dimensional Hausdorff content of $\supp(\mu)$ is finite and the dimension of $\supp(\mu)$ is no greater than $d-2$.

Take an arbitrary $x$ from $C\cap\supp(\mu)$. By Lemma \ref{lem:orth_point}, there is $y\in\supp(\mu)$ such that $\langle x,y \rangle = 0$. Assume there are two points $x_1$, $x_2$ in $C\cap\supp(\mu)$ such that $\langle x_1, y\rangle > 0$ and $\langle x_2, y\rangle < 0$. The spherical distance between $x_1$ and $x_2$ is no greater than $2\theta<\pi/2$ so $\langle x_1, x_2\rangle > 0$. Therefore, we have found three points $x_1, x_2, y$ in $\supp(\mu)$ such that $\langle x_1, y \rangle \langle x_2, y \rangle \langle x_1, x_2 \rangle <0$. This contradicts Corollary \ref{cor:$1$-frame}.

This means all points $z\in C\cap\supp(\mu)$ simultaneously satisfy either $\langle z, y\rangle \geq 0$ or $\langle z, y\rangle \leq 0$. We denote by $H_x$ a halfspace defined either by $\langle z, y\rangle \geq 0$ or $\langle z, y\rangle \leq 0$ such that $C\cap\supp(\mu)\subset H_x$. Then $C\cap\supp(\mu)$ is a subset of the convex set $H$ defined as the intersection of $H_x$ taken for all $x\in C\cap\supp(\mu)$. Moreover, each point $x\in C\cap\supp(\mu)$ is a boundary point of $H$ because it belongs to a hyperplane $\langle x, y\rangle = 0$ defining $H_x$. Since $C \cap H$ is convex, Corollary \ref{cor:Convex Boundary Finite} tells us its boundary has finite $(d-2)$-dimensional Hausdorff content so the content of $C\cap\supp(\mu)$ is finite as well.
\end{proof}

\section{Acknowledgments}

We would like to thank David de Laat and Alexandr Polyanskii for useful discussions. All of the authors express gratitude to ICERM for hospitality and support during the Collaborate{@}ICERM  program  in  2021. D.~Bilyk has been supported by the NSF grant DMS-2054606 and Simons  Collaboration Grant 712810. D.~Ferizovi\'{c} thankfully acknowledges support by the Methusalem grant of the Flemish Government. A.~Glazyrin was supported by the NSF grant DMS-2054536. R.W.~Matzke was supported by the Doctoral Dissertation Fellowship of the University of Minnesota, the Austrian Science Fund FWF project F5503 part of the Special Research Program (SFB) ``Quasi-Monte Carlo Methods: Theory and Applications", and NSF Postdoctoral Fellowship Grant 2202877. O.~Vlasiuk was supported by an AMS-Simons Travel Grant.

\section{Appendix}\label{sec:Appendix}

For $s >0$, we denote the $s$-dimensional Hausdorff measure by $\mathcal{H}_s$. The fact that the Hausdorff dimension of the boundary of a convex $d$-dimensional set cannot be larger than $d-1$ seems to be well-known. However, we were not able to find a concrete reference for this statement in the literature so we suggest a short proof here.

\begin{proposition}
If $K \subset \mathbb{R}^d$ is a non-empty, compact, convex set, then $\mathcal{H}_{d-1}(\partial K)$ is finite.
\end{proposition}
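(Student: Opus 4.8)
The plan is to realize $\partial K$ as the image of a sphere under a $1$-Lipschitz map and then invoke the standard fact that Lipschitz maps do not increase Hausdorff measure. The map I will use is the \emph{metric (nearest-point) projection} $\pi_K \colon \mathbb{R}^d \to K$, sending each point to its unique closest point in $K$; this is well defined precisely because $K$ is non-empty, closed, and convex, and it is a classical consequence of convexity that $\pi_K$ is $1$-Lipschitz (nonexpansive). Concretely, $\pi_K(q)=p$ holds if and only if $p\in K$ and $\langle q-p,\, y-p\rangle \le 0$ for all $y\in K$, a characterization I will use to control where points map.

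Since $K$ is compact, I can fix a closed ball $B=B(0,R)$ with $K$ contained in its interior, and set $S=\partial B$, the sphere of radius $R$. The first main step is to show that $\partial K \subseteq \pi_K(S)$. Given $p\in\partial K$, the supporting hyperplane theorem furnishes a unit outer normal $n$ at $p$, so that $\langle n,\, y-p\rangle\le 0$ for all $y\in K$. For $q=p+tn$ with $t\ge 0$ one then has $\langle q-p,\, y-p\rangle = t\langle n,\, y-p\rangle \le 0$, whence $\pi_K(p+tn)=p$ for every $t\ge 0$. The ray $\{p+tn : t\ge 0\}$ starts inside $B$ and escapes to infinity, so it meets $S$ at some point $q$, and $\pi_K(q)=p$. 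Thus every boundary point of $K$ is the image under $\pi_K$ of a point of $S$, giving the inclusion.

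With this in hand the conclusion is immediate: by monotonicity of Hausdorff measure together with the Lipschitz estimate $\mathcal{H}_s(f(A))\le L^s\,\mathcal{H}_s(A)$ applied with $L=1$ and $s=d-1$,
\begin{equation*}
\mathcal{H}_{d-1}(\partial K) \;\le\; \mathcal{H}_{d-1}\big(\pi_K(S)\big) \;\le\; \mathcal{H}_{d-1}(S) \;<\; \infty,
\end{equation*}
since the $(d-1)$-dimensional Hausdorff measure of a Euclidean sphere of radius $R$ is its finite surface area. The main obstacle is the covering step $\partial K\subseteq\pi_K(S)$: one must produce, for each boundary point, a direction along which the projection stays constant, which is exactly the content of the supporting hyperplane theorem combined with the normal-cone description of $\pi_K$. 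I should also verify the degenerate case in which $K$ has empty interior — then every point of $K$ is a boundary point, but $K$ lies in a proper affine subspace, so the orthogonal complement of its affine hull supplies admissible normal directions and the argument goes through verbatim.
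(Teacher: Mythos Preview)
Your argument is correct. The key steps --- nonexpansiveness of the metric projection $\pi_K$, the variational characterization $\pi_K(q)=p \iff \langle q-p,\,y-p\rangle\le 0$ for all $y\in K$, the supporting hyperplane theorem, and the Lipschitz bound $\mathcal{H}_s(f(A))\le L^s\,\mathcal{H}_s(A)$ --- are all standard and assembled correctly. The surjectivity step $\partial K\subseteq \pi_K(S)$ is the crux and you handle it cleanly; the observation that the ray $p+tn$ begins in the interior of $B$ (since $K\subset B^\circ$) and hence must cross $S$ is sound.

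Your route is genuinely different from the paper's. The paper first disposes of the empty-interior case by observing $K$ lies in a hyperplane, and then, assuming $0\in K^\circ$, uses the \emph{radial} (central) projection $p:\mathbb{S}^{d-1}\to\partial K$, citing that the radial function of a convex body is Lipschitz to conclude that $p$ is Lipschitz. You instead project \emph{inward} via the metric projection from a large sphere enclosing $K$. Your approach has two advantages: the $1$-Lipschitz property of $\pi_K$ is entirely self-contained (it follows in one line from the obtuse-angle characterization, whereas the Lipschitz bound on the radial function is a separate lemma the paper imports), and your argument works uniformly without a case split on whether $K$ has interior --- indeed, the supporting hyperplane theorem already supplies a normal at every boundary point, so your closing remark about the degenerate case is a reassurance rather than a separate argument. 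The paper's approach, on the other hand, yields a bi-Lipschitz bijection $\mathbb{S}^{d-1}\to\partial K$ in the full-dimensional case, which is a stronger structural statement than you need here.
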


\begin{proof}
If there is a set $d+1$ points in $K$ that is not contained in an affine hyperplane, $K$ contains the simplex with these points as vertices, so $K$ has an interior point. If no such subset exists, then $K$ is contained by a hyperplane $H$ of dimension $d-1$, so, since $K$ is compact, $\mathcal{H}_{d-1}(\partial K) \leq \mathcal{H}_{d-1}(K) < \infty$.

Now, without loss of generality, assume $0 \in K^{\circ}$, let $p: \mathbb{S}^{d-1} \rightarrow \partial K $ be the central projection from the unit sphere onto the boundary of $K$, i.e. $p(x) \in x \mathbb{R}_+ \cap \partial K$. Note that this function is a bijection.

For any $x, y \in \mathbb{S}^{d-1}$, we have that
\begin{align*}
\frac{\|p(x)- p(y) \|}{\|x - y\|} 
& = \frac{ \Big\| \| p(x) \| x - \| p(y) \| x + \|p(y)\| x - \|p(y)\| y \Big\|}{\|x - y\|} \\
& \leq  \frac{ \Big| \| p(x) \| - \|p(y)\| \Big| \; \|x \| + \|p(y)\| \; \| x - y\| }{\|x - y\|} \\
& = \frac{\Big| \| p(x) \| - \|p(y)\| \Big|}{\|x - y\|} + \|p(y)\|.
\end{align*}
The radial function of $K$, $r: \mathbb{S}^{d-1} \rightarrow [0, \infty)$ with $r(x) = \|p(x)\|$, is Lipschitz (see, e.g., \cite[Theorem 1]{T}), and $\|p(y)\|$ is bounded, so we can see that the projection $p$ is indeed Lipschitz (in fact, one can show it is bi-lipschitz), and so $\mathcal{H}_{d-1}(\partial K) < \infty$ (see, e.g., \cite[Theorem 7.5]{Ma}).

\end{proof}

If $C$ is a spherical cap of radius $\theta \in (0, \frac{\pi}{2})$ centered at $x$, and $T$ is the hyperplane tangent to the sphere at $x$, then the central projection from $C$ to $T$ is injective, preserves geodesics (and therefore convexity) and is continuously differentiable, with a continuously differentiable inverse on its image (and therefore is bilipschitz). We then have the following corollary:

\begin{corollary}\label{cor:Convex Boundary Finite}
If $K \subset \mathbb{S}^{d-1}$ is geodesically convex and contained by a spherical cap of radius $\theta \in (0 , \frac{\pi}{2})$, then $\mathcal{H}_{d-2}(\partial K)$ is finite.
\end{corollary}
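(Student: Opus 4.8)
The plan is to reduce this spherical statement to the Euclidean Proposition just proved by transporting $K$ to a flat $(d-1)$-dimensional hyperplane via the gnomonic (central) projection. First I would set up the central projection $p \colon C \to T$ from the open spherical cap $C$ of radius $\theta < \frac{\pi}{2}$ centered at a point $x$ onto the hyperplane $T$ tangent to the sphere at $x$, sending each $w \in C$ to the unique intersection of the ray $\mathbb{R}_+ w$ with $T$. As recorded in the paragraph preceding the corollary, $p$ is injective, carries spherical geodesics to straight line segments, and is continuously differentiable with continuously differentiable inverse on its image; hence it is a bilipschitz homeomorphism of $C$ onto an open convex subset of $T$.

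Next, since $K$ is geodesically convex and $p$ maps geodesics to line segments, the image $p(K)$ is an ordinary convex subset of $T$. Identifying $T$ with $\mathbb{R}^{d-1}$, the set $p(K)$ is a non-empty compact convex body, so the Proposition applied in dimension $d-1$ yields $\mathcal{H}_{d-2}(\partial\, p(K)) < \infty$, where the boundary is taken inside $T$. I would then transfer this bound back to the sphere: because $p$ restricted to $C$ is a homeomorphism onto its image and $K$ lies in $C$, the topological boundary $\partial K$ (relative to $\mathbb{S}^{d-1}$, equivalently relative to $C$) is carried bijectively onto $\partial\, p(K)$ (relative to $T$). A map whose inverse is $L$-Lipschitz satisfies $\mathcal{H}_{d-2}(p^{-1}(A)) \le L^{d-2}\,\mathcal{H}_{d-2}(A)$, so $\mathcal{H}_{d-2}(\partial K) \le L^{d-2}\,\mathcal{H}_{d-2}(\partial\, p(K)) < \infty$, which is the desired conclusion.

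The step I expect to be most delicate is the correct matching of boundaries under the projection: one must ensure that the intrinsic boundary of $K$ as a subset of the $(d-1)$-dimensional cap corresponds exactly to the boundary of $p(K)$ as a convex subset of the $(d-1)$-dimensional hyperplane $T$, and not to the boundary of $p(K)$ viewed inside the full ambient space $\mathbb{R}^d$ (which would be the whole set). Since $p$ is a homeomorphism between the two $(d-1)$-dimensional manifolds $C$ and $p(C)$, this identification is automatic, but it is worth stating explicitly that all boundaries are taken in the appropriate $(d-1)$-dimensional ambient spaces so that the Proposition is invoked in the right dimension. Everything else is a direct application of the Proposition together with the standard fact that Lipschitz maps do not increase $s$-dimensional Hausdorff measure by more than the $s$-th power of the Lipschitz constant.
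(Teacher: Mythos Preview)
Your proposal is correct and follows exactly the route the paper takes: the paragraph immediately preceding the corollary records that the central projection from the cap $C$ to the tangent hyperplane $T$ is geodesic-preserving and bilipschitz, and the corollary is then meant to be an immediate consequence of the Proposition applied in $\mathbb{R}^{d-1}\cong T$. You have simply written out the details of that deduction, including the transfer of boundaries and the standard Lipschitz bound on Hausdorff measure.

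One small remark: you assert that $p(K)$ is compact, which the hypothesis of the corollary does not guarantee (neither does the paper address this). It causes no trouble, since one may replace $K$ by its closure---still geodesically convex and contained in a slightly larger cap of radius less than $\pi/2$---and for a convex set the boundary of the closure contains the boundary of the original set (or, if $K$ has empty interior in $T$, it lies in a $(d-2)$-dimensional affine subspace and the conclusion is immediate). In the paper's application the set in question is closed anyway.
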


\bibliographystyle{alpha}

\end{document}